\newtheorem{theorem}{Theorem}
\newtheorem{remark}{Remark}
\newtheorem{assumption}{Assumption}
\newtheorem{lemma}{Lemma}
\newcommand{\vecc}{\boldsymbol{\operatorname{Vec}}}
  \def\vw{{\bf w}} \def\vx{{\bf x}}
\def \Vap{\varepsilon}
\def \obx{\overline{\mathbf{x}}}
\def \omw{\overline{\mathbf{w}}}
\def \brx{\breve{\mathbf{x}}}
\def \bU{\overline{U}}
\def \oR{\overline{R}}
\def \br{\overline{r}}
\def \bzeta{\boldsymbol{\zeta}}
\def \obzeta{\overline{\boldsymbol{\zeta}}}
\def \bxi{\boldsymbol{\xi}}
\def \obxi{\overline{\boldsymbol{\xi}}}
\def \e{\varepsilon}
\def \R{\mathbb{R}}
\def \dx{\,dx}
\def \E{\mathbb{E}}
\def \calF{\mathcal{F}}
\def \calG{\mathcal{G}}
\title{Annealing for Distributed Global Optimization}
\author{Brian Swenson, Soummya Kar, H. Vincent Poor, and Jos\'{e} M. F. Moura\thanks{This work was partially supported by the Air Force Office of Scientific Research under MURI Grant FA9550-18-1-0502 and was partially supported by the National Science Foundation under Award Number CCF. 1513936 \newline
Brian Swenson and H. Vincent Poor are with the Department of Electrical Engineering, Princeton University, Princeton, NJ 08540 {\texttt{bswenson@princeton.edu, poor@princeton.edu}}.\newline
Soummya Kar and Jos\'{e} M. F. Moura are with the Department of Electrical and Computer Engineering, Carnegie Mellon University, Pittsburgh, PA 15213 {\texttt{soummyak@andrew.cmu.edu, moura@andrew.cmu.edu}}}}
\begin{document}

\maketitle

\renewcommand{\thefootnote}{\fnsymbol{footnote}}

\renewcommand{\thefootnote}{\arabic{footnote}}

\begin{abstract}
The paper proves convergence to global optima for a class of distributed algorithms for nonconvex optimization in network-based multi-agent settings. Agents are permitted to communicate over a time-varying undirected graph. Each agent is assumed to possess a local objective function (assumed to be smooth, but possibly nonconvex). The paper considers algorithms for optimizing the sum function.
A distributed algorithm of the consensus+innovations type is proposed which relies on first-order information at the agent level. Under appropriate conditions on network connectivity and the cost objective, convergence to the set of global optima is achieved by an annealing-type approach, with decaying Gaussian noise independently added into each agent's update step. It is shown that the proposed algorithm converges in probability to the set of global minima of the sum function.
\end{abstract}

\begin{IEEEkeywords} Distributed optimization, nonconvex optimization, multiagent systems \end{IEEEkeywords}

\thispagestyle{plain}
\markboth{}{}

\section{Introduction}
\label{introduction}
In this paper we consider a class of algorithms for nonconvex optimization in distributed multi-agent systems and prove convergence to the set of global optima.
Recent years have seen a surge in research interest in nonconvex optimization, motivated, to a large degree, by emerging applications in machine learning and artificial intelligence.
The majority of research in this area has focused on centralized computing frameworks in which memory and processing resources are either shared or coordinated by a central mechanism \cite{bottou2010large,dean2008mapreduce,zinkevich2010parallelized,ge2015escaping,kingma2014adam,jin2017escape,dauphin2014identifying,gelfand1991recursive,murray2018revisiting}.

With the advent of the internet of things (IoT) and low-latency 5G communication networks, there is a growing trend towards storing and processing data at the ``edge'' of the network (e.g., directly on IoT devices) rather than processing data in the cloud.
This necessitates algorithms that are able to operate robustly in adhoc networked environments without centralized coordination. Beyond applications in IoT, distributed algorithms for non-convex optimization also play an important role in other domains, including power systems \cite{guo2017case}, sensor networks \cite{biswas2006semidefinite}, unmanned aerial vehicles \cite{jun2003path}, and wireless communications \cite{bianchi2013convergence}.

This paper considers the following distributed computation framework: A group of $N$ agents (or nodes) communicates over a (possibly random, possibly sparse) undirected communication graph $G$. Each agent has a local objective function $U_n:\R^d\to\R$. We are interested in distributed algorithms that optimize the sum function
\begin{equation} \label{eq:U-def}
U(x) = \frac{1}{N}\sum_{n=1}^N U_n(x)
\end{equation}
using only local neighborhood information exchange between agents and without any centralized coordination.

As an example, in the context of distributed risk minimization or probably approximately correct (PAC) learning, e.g., \cite{lee2018distributed}, the $U_n(\cdot)$'s may correspond to (expected) risk
$$
U_{n}(\theta) = \E_{D_{n}}[l_{n}(\theta,z_{n})],
$$
where $l_{n}(\cdot,\cdot)$ is the local loss function at agent $n$ and $D_{n}$ is the local data distribution. The agents are interested in learning a common ``hypothesis,'' parameterized by $\theta$, using their collective data.

Distributed optimization algorithms have been studied extensively when the objective functions are convex \cite{rabbat2004distributed,nedic2009distributed,jakovetic2014fast,nedic2015distributed,
chen2012diffusion,gharesifard2014distributed}.
Not so when the objective is \emph{non-convex}. The majority of current work in this area focuses on demonstrating convergence of distributed algorithms to critical points of $U$ (not necessarily to minima, local or global).

This motivates us to consider a class of distributed algorithms for computing the global optima of \eqref{eq:U-def}. Our algorithms take the form:
\begin{align} \label{eq:update}
\mathbf{x}_{n}(t+1) ~=~ & \mathbf{x}_{n}(t)-\beta_{t}\sum_{l\in\Omega_{n}(t)}\left(\mathbf{x}_{n}(t)-\mathbf{x}_{l}(t)\right)\\
& -\alpha_{t}\left(\nabla U_{n}(x_{n}(t))+\bzeta_{n}(t)\right)+\gamma_{t}\mathbf{w}_{n}(t),
\end{align}
$n=1,\ldots,N$, where $\vx_n(t)\in\R^d$ is the state of agent $n$ at iteration $t\geq 0$, $\Omega_n(t)$ denotes the set of agents neighboring agent $n$ at time $t$ (per the communication graph), $\{\alpha_t\}$ and $\{\beta_t\}$ are sequences of decaying weight parameters, $\{\gamma_t\}$ is a sequence of decaying annealing weights, $\zeta_n$(t) is a $d$-dimensional random variable (representing gradient noise), and $\vw_n(t)$ is a $d$-dimensional Gaussian noise (introduced for annealing).
The algorithm is distributed since in \eqref{eq:update} each agent only knows its local function $U_n(\cdot)$ and accesses information on the state of neighboring agents.

Algorithm \eqref{eq:update} may be viewed as a distributed consensus + innovations algorithm \cite{kar2012distributed}. The algorithm consists of the consensus term, $-\beta_{t}\sum_{l\in\Omega_{n}(t)}\left(\mathbf{x}_{n}(t)-\mathbf{x}_{l}(t)\right)$, that encourages agreement among agents, and the innovation term, $-\alpha_{t}\left(\nabla U_{n}(x_{n}(t))+\bzeta_{n}(t)\right)$, that encourages each agent to follow the gradient descent direction of their local objective function (with $\bzeta_t$ being zero-mean gradient noise). Finally, the term $\gamma_{t}\mathbf{w}_{n}(t)$ is an annealing term that injects decaying Gaussian noise into the dynamics to destabilize local minima and saddle points.
By appropriately controlling the decay rates of the parameter sequences, one can balance the various objectives of reaching consensus among agents, reaching a critical point of \eqref{eq:U-def}, and destabilizing local minima and saddle points (see Assumption~\ref{ass:weights}).

Our main contribution is the following: We show that, under appropriate assumptions (outlined below), the distributed algorithm \eqref{eq:update} converges in probability to the set of global minima of \eqref{eq:U-def}. More precisely, it will be shown that (i) agents reach consensus, almost surely (a.s.), i.e., $\lim_{t\to\infty}\|\vx_n(t) - \vx_{\ell}(t)\| = 0$ for each $n,\ell=1,\ldots,N$, a.s., and (ii) for each agent $n$, $\vx_n(t)$ converges in probability to the set of global minima of $U(\cdot)$.
A precise statement of the main result is given in Theorem~\ref{th:main_result} at the end of Section~\ref{sec:main_res}.

Theorem~\ref{th:main_result} is proved under Assumptions \ref{ass:Lip}--\ref{ass:GM_6}.
Assumptions \ref{ass:Lip}--\ref{ass:similarity} and \ref{ass:GM_1}--\ref{ass:GM_6} concern the agents' objective functions, Assumption~\ref{ass:conn} concerns the time-varying communication graph, Assumptions \ref{ass:grad_noise}--\ref{ass:gauss} concern the gradient annealing noise, and Assumption~\ref{ass:weights} concerns the weight parameter sequences.

\bigskip
\noindent \textbf{Related Work}.
Work on distributed optimization with convex objectives has been studied extensively; for an overview of the expansive literature in this field we refer readers to \cite{rabbat2004distributed,nedic2009distributed,jakovetic2014fast,nedic2015distributed,
chen2012diffusion,gharesifard2014distributed} and references therein.

The topic of distributed algorithms for non-convex optimization is 
a subject of more recent research focus. We briefly summarize related contributions here. Reference \cite{bianchi2013convergence} considers an algorithm for nonconvex optimization (possibly constrained) over an undirected communication graph and shows convergence to KKT points. Relevant applications to wireless adhoc networks are discussed. Reference \cite{zhu2013approximate} considers a distributed primal dual algorithm for nonconvex optimization. The primal dual algorithm solves an approximation to the original nonconvex problem. Reference \cite{magnusson2016convergence} analyzes the alternating direction penalty method and method of multipliers in nonconvex problems and demonstrates convergence to primal feasible points under mild assumptions. Reference \cite{tatarenko2017non} considers a push-sum algorithm for distributed nonconvex optimization on time-varying directed graphs and demonstrates convergence to first-order stationary points. \cite{di2016next} considers a distributed algorithm for nonconvex optimization with smooth objective and possibly non-smooth regularizer and demonstrates convergence to stationary solutions. Our work differs from these primarily in that we study distributed algorithms for \emph{global} optimization of a nonconvex function.

The key feature of this approach is the incorporation of decaying Gaussian noise that allows the algorithm to escape local minima. Such techniques were explored in \cite{gelfand1991recursive} and later studied and successfully applied in various centralized settings; e.g., \cite{kushner2003stochastic,maryak2001global,raginsky2017non,andradottir1998review,kushner1987asymptotic} and references therein. On the other hand, consensus + innovations techniques, such as those used in \cite{kar2012distributed,kar2013distributed}, are used in distributed settings.
In this paper we prove global optimal convergence for consensus + innovations techniques, with an appropriate annealing schedule, in nonconvex optimization. This results in a distributed equivalent of the centralized result in \cite{gelfand1991recursive}.

As noted in \cite{kar2011convergence}, the analysis techniques developed to study consensus + innovations algorithms contributes to the general theory of mixed-time-scale stochastic approximation (SA) algorithms, e.g., \cite{borkar1997stochastic}. In such algorithms, the right-hand side of the stochastic approximation difference equation contains two potentials decaying at different rates. The work \cite{gelfand1991recursive} studies mixed-time scale SA algorithms in the context of simulated annealing. In \cite{gelfand1991recursive}, the term that serves a role analogous to our innovations potential is assumed to converge asymptotically to a Martingale difference process. A key element of our analysis here is to characterize the rate at which the innovation potential converges to a Martingale difference sequence in order to apply the results of \cite{gelfand1991recursive}.


\bigskip
\noindent \textbf{Organization}.
The remainder of the paper is organized as follows. Section~\ref{notgraph} introduces relevant notation. Section~\ref{sec:alg} formally introduces our distributed algorithm. Section~\ref{sec:tech_lemmas} presents the assumptions used in our main result and some intermediate results, and it reviews a classical result in global optimization (Theorem~\ref{th:GM}) that will be used in the proof of our main result. Section~\ref{sec:main_res} proves the main result (Theorem~\ref{th:main_result}).

\subsection{Notation}
\label{notgraph} The set of reals is denoted by $\mathbb{R}$, whereas $\mathbb{R}_{+}$ denotes the non-negative reals. For $a,b\in\mathbb{R}$, we will use the notations $a\vee b$ and $a\wedge b$ to denote the maximum and minimum of $a$ and $b$ respectively. We denote the $k$-dimensional Euclidean space by
$\mathbb{R}^{k}$. The set of $k\times k$ real matrices is denoted by $\mathbb{R}^{k\times k}$.
The $k\times k$ identity matrix is
denoted by $I_{k}$, while $\mathbf{1}_{k}$ and $\mathbf{0}_{k}$ denote
respectively the column vector of ones and zeros in
$\mathbb{R}^{k}$. Often the symbol $0$ is used to denote the $k\times p$ zero matrix, the dimensions being clear from the context. The operator
$\left\|\cdot\right\|$ applied to a vector denotes the standard
Euclidean $\mathcal{L}_{2}$ norm, while applied to matrices it denotes the induced $\mathcal{L}_{2}$ norm, which is equivalent to the matrix spectral radius for symmetric matrices. The notation $A\otimes B$ is used for the Kronecker product of two matrices $A$ and $B$. We say that a function $f$ is of class $C^k$, $k\geq 1$, if $f$ is $k$-times continuously differentiable.

Given a set of elements in $z_1,\ldots,z_N$ belonging to some Euclidean space, we let $\vecc(\{z_i\}_{i=1}^N)$ denote the vector stacking these elements. To simplify notation, we sometimes suppress the interior brackets when the meaning is clear.

We assume there exists a rich enough probability space to carry out the constructions of the random objects defined in the paper. Unless stated otherwise, all (in)equalities involving random objects are to be interpreted almost surely (a.s.). We denote by $\mathbb{P}$ and $\mathbb{E}$ probability and expectation respectively. Given a measure $\pi$ on $\mathbb{R}^{k}$ and a (measurable) function $f:\mathbb{R}^{k}\mapsto\mathbb{R}$, we let
\begin{equation}\label{eq:pi-f-def}
\pi(f) ~\dot= \int f d\pi,
\end{equation}
whenever the integral exists.
For a stochastic process $\{Z_{t}\}$ and a function $f$, we let
\begin{equation} \label{eq:def-cond-E}
\E_{t_0,z_0}[f(Z_t)] ~ \dot= ~ \E[f(Z_t)\vert Z_{t_0} = z_0].
\end{equation}

\textbf{Spectral graph theory}: The inter-agent communication topology may be described by an \emph{undirected} graph $G=(V,E)$, with $V=\left[1\cdots N\right]$ and~$E$  the set of agents (nodes) and communication links (edges), respectively. The unordered pair $(n,l)\in E$ if there exists an edge between nodes~$n$ and~$l$. We consider simple graphs, i.e., graphs devoid of self-loops and multiple edges. A graph is connected if there exists a path\footnote{A path between nodes $n$ and $l$ of length $m$ is a sequence
$(n=i_{0},i_{1},\cdots,i_{m}=l)$ of vertices, such that $(i_{k},i_{k+1})\in E\:\forall~0\leq k\leq m-1$.}, between each pair of nodes. The neighborhood of node~$n$ is
\begin{equation}
\label{def:omega} \Omega_{n}=\left\{l\in V\,|\,(n,l)\in
E\right\}. 
\end{equation}
Node~$n$ has degree $d_{n}=|\Omega_{n}|$ (the number of edges with~$n$ as one end point). The structure of the graph can be described by the symmetric $N\times N$ adjacency matrix, $A=\left[A_{nl}\right]$, $A_{nl}=1$, if $(n,l)\in E$, $A_{nl}=0$, otherwise. Let the degree matrix  be the diagonal matrix $D=\mbox{diag}\left(d_{1}\cdots d_{N}\right)$. The positive semidefinite matrix $L=D-A$ is the graph Laplacian matrix. The eigenvalues of $L$ can be ordered as $0=\lambda_{1}(L)\leq\lambda_{2}(L)\leq\cdots\leq\lambda_{N}(L)$, the eigenvector corresponding to $\lambda_{1}(L)$ being $(1/\sqrt{N})\mathbf{1}_{N}$. The multiplicity of the zero eigenvalue equals the number of connected components of the network; for a connected graph, $\lambda_{2}(L)>0$. This second eigenvalue is the algebraic connectivity or the Fiedler value of
the network; see \cite{chung1997spectral} for detailed treatment of graphs and their spectral theory.

\section{Algorithm}
\label{sec:alg}

Consider $N$ agents connected over a time-varying graph, with $L_{t}$ denoting the graph Laplacian at time $t$. Let $U_n:\R^d\to \R$, $n=1,\ldots,N$ denote the objective function of agent $n$. Let $U:\R^d\to \R$ be as defined in \eqref{eq:U-def}.

The agents update their states in a distributed fashion according to \eqref{eq:update}
for all $t\geq 0$ with deterministic initial conditions $\mathbf{x}_{n}(0)\in\mathbb{R}^{d}$, $n=1,\cdots,N$. In \eqref{eq:update}, $\bzeta_{n}(t)$ denotes gradient noise and $\mathbf{w}_{n}(t)$ denotes a standard normal vector (introduced for \emph{annealing}). In vector form, the update in~\eqref{eq:update} may be written as:
\begin{align}
\label{eq:vec_update}
\mathbf{x}_{t+1}= ~\mathbf{x}_{t} & -\beta_{t}\left(L_{t}\otimes I_{d}\right)\mathbf{x}_{t}\\
& -\alpha_{t}\left(\nabla\bU(\mathbf{x}_{t}) + \bzeta_{t}\right)+\gamma_{t}\mathbf{w}_{t},
\end{align}
where $\mathbf{x}_{t}=\vecc(\mathbf{x}_{n}(t))$, $\bU(\mathbf{x}_{t})=\vecc(U_{n}(\mathbf{x}_{n}(t)))$, $\bzeta_{t}=\vecc(\bzeta_{n}(t))$, $\mathbf{w}_{t}=\vecc(\mathbf{w}_{n}(t))$, and $L_{t}$ denotes the (stochastic) undirected graph Laplacian.

\begin{remark}
In empirical risk minimization, agents optimize an empirical risk function using collected data, rather than optimizing the expected risk. In such problems, it is common to use stochastic gradient descent (SGD) techniques that mitigate computational burden by handling the data in batches. We note that our framework readily handles such SGD techniques as the $\bzeta_t$ term can model independent gradient noise.
\end{remark}

\section{Intermediate Results}
\label{sec:tech_lemmas}
This section presents some intermediate results. In Section~\ref{sec:tech-lemmas}, we begin by presenting several technical lemmas. Subsequently, in Section~\ref{sec:consensus} we will use these technical lemmas to prove that the algorithm \eqref{eq:vec_update} obtains asymptotic consensus (see Lemma~\ref{lm:mean_conv_as}). Finally, in Section~\ref{sec:GM-result}, we briefly review classical results in global optimization that will be used in the proof of our main result.

\subsection{Technical Results} \label{sec:tech-lemmas}
We begin by making the following assumptions.
\begin{assumption}
\label{ass:Lip} The functions $U_{n}(\cdot)$ are $C^{2}$ with Lipschitz continuous gradients, i.e., there exists $L>0$ such that
\begin{align}
\left\|\nabla U_{n}(\mathbf{x})-\nabla U_{n}(\acute{\mathbf{x}})\right\|\leq L\left\|\mathbf{x}-\acute{\mathbf{x}}\right\|
\end{align}
for all $n$.
\end{assumption}
\begin{assumption}
\label{ass:similarity} The functions $U_{n}(\cdot)$ satisfy the following bounded gradient-dissimilarity condition:
\begin{align}
\sup_{\mathbf{x}\in\mathbb{R}^{d}}\left\|\nabla U_{n}(\mathbf{x})-\nabla U(\mathbf{x})\right\|<\infty,~~~\forall n.
\end{align}
\end{assumption}

Denote by $\{\mathcal{H}_{t}\}$ the natural filtration corresponding to the update process~\eqref{eq:update}, i.e., for all $t$, the $\sigma$-algebra $\mathcal{H}_t$ is given by
\begin{align}
\label{eq:filt}
\mathcal{H}_{t}=\sigma\left(\mathbf{x}_{0},L_{0},\cdots,L_{t-1},\bzeta_{0},\cdots,\bzeta_{t-1},\mathbf{w}_{0},\cdots,\mathbf{w}_{t-1}\right).
\end{align}

\begin{assumption}
\label{ass:conn} The $\{\mathcal{H}_{t+1}\}$-adapted sequence of undirected graph Laplacians $\{L_{t}\}$ are independent and identically distributed (i.i.d.), with $L_{t}$ being independent of $\mathcal{H}_{t}$ for each $t$, and are connected in the mean, i.e., $\lambda_{2}(\bar{L})>0$ where $\bar{L}=\mathbb{E}[L_{t}]$.
\end{assumption}
\begin{assumption}
\label{ass:grad_noise} The sequence $\{\bzeta_{t}\}$ is $\{\mathcal{H}_{t+1}\}$-adapted and there exists a constant $C_{1}>0$ such that
\begin{align}
\label{ass:grad_noise1}
\mathbb{E}[\bzeta_{t}~|~\mathcal{H}_{t}]=0~~\mbox{and}~~\mathbb{E}[\|\bzeta_{t}\|^{2}~|~\mathcal{H}_{t}]<C_{1}
\end{align}
for all $t\geq 0$.
\end{assumption}
\begin{assumption}
\label{ass:gauss}
For each $n$, the sequence $\{\mathbf{w}_{n}(t)\}$ is a sequence of i.i.d. $d$-dimensional standard Gaussian vectors with covariance $I_d$ and with $\mathbf{w}_{n}(t)$  being independent of $\mathcal{H}_{t}$ for all $t$. Further, the sequences $\{\mathbf{w}_{n}(t)\}$ and $\{\mathbf{w}_{l}(t)\}$ are mutually independent for each pair $(n,l)$ with $n\neq l$.
\end{assumption}
\begin{assumption}
\label{ass:weights} The sequences $\{\alpha_{t}\}$, $\{\beta_{t}\}$, and $\{\gamma_{t}\}$ satisfy
\begin{align}
\label{eq:weights}
\alpha_{t}=\frac{c_{\alpha}}{t},~~\beta_{t}=\frac{c_{\beta}}{t^{\tau_{\beta}}},~~\gamma_{t}=\frac{c_{\gamma}}{t^{1/2}\sqrt{\log\log t}},~~~\mbox{for $t$ large},
\end{align}
where $c_{\alpha},c_{\beta},c_{\gamma}>0$ and $\tau_{\beta}\in (0,1/2)$.
\end{assumption}

The following lemma characterizes the decay rate of scaled gradient noise.
\begin{lemma}
\label{lm:zeta_decay}
Let Assumption~\ref{ass:grad_noise} hold. Then, for every $\delta>0$, we have that $(t+1)^{-1/2-\delta}\|\bzeta_{t}\|\rightarrow 0$ a.s. as $t\rightarrow\infty$.
\end{lemma}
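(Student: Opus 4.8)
The plan is to establish almost-sure convergence through the first Borel--Cantelli lemma, applied to the deviation events $\{(t+1)^{-1/2-\delta}\|\bzeta_t\| > \e\}$. This reduces the whole statement to a summability check on the corresponding tail probabilities, which follows from the conditional second-moment bound in Assumption~\ref{ass:grad_noise}. Notably, the zero-mean hypothesis plays no role in this particular decay estimate; only the uniform second-moment bound is needed.

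First I would pass from the conditional to the unconditional second moment via the tower property: $\E[\|\bzeta_t\|^2] = \E[\E[\|\bzeta_t\|^2\mid\calH_t]] \le C_1$ for all $t\ge 0$. Then, fixing $\e>0$ and applying Markov's inequality to the nonnegative random variable $\|\bzeta_t\|^2$, I obtain
\begin{equation}
\mathbb{P}\left((t+1)^{-1/2-\delta}\|\bzeta_t\| > \e\right) = \mathbb{P}\left(\|\bzeta_t\|^2 > \e^2 (t+1)^{1+2\delta}\right) \le \frac{\E[\|\bzeta_t\|^2]}{\e^2 (t+1)^{1+2\delta}} \le \frac{C_1}{\e^2 (t+1)^{1+2\delta}}.
\end{equation}
Since $\tfrac12+\delta$ is chosen precisely so that squaring yields the exponent $1+2\delta>1$, the right-hand side is the general term of a convergent $p$-series, so $\sum_{t\ge 0}\mathbb{P}((t+1)^{-1/2-\delta}\|\bzeta_t\|>\e)<\infty$. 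The Borel--Cantelli lemma then gives that, for each fixed $\e>0$, the event $\{(t+1)^{-1/2-\delta}\|\bzeta_t\|>\e\}$ occurs only finitely often, almost surely.

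Finally I would upgrade this fixed-$\e$ statement to full almost-sure convergence by intersecting over the countable family $\e_k = 1/k$, $k\ge 1$: on the almost-sure event where $(t+1)^{-1/2-\delta}\|\bzeta_t\|$ exceeds $1/k$ only finitely often for every $k$, we have $\limsup_{t\to\infty}(t+1)^{-1/2-\delta}\|\bzeta_t\| = 0$, which is exactly the claimed convergence. There is no substantive obstacle in this argument; the only points requiring minor care are taking the outer expectation before invoking Markov (so that the merely conditional bound is used correctly) and verifying that the strict inequality $\delta>0$ is what makes the series summable.
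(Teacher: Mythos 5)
Your proof is correct and follows essentially the same route as the paper's: Markov's inequality on $\|\bzeta_t\|^2$ with the uniform second-moment bound, summability of the resulting $p$-series with exponent $1+2\delta>1$, Borel--Cantelli, and then passing to all $\e>0$. You are slightly more explicit than the paper on two minor points (the tower-property step from the conditional to the unconditional second moment, and the countable intersection over $\e_k=1/k$), but the argument is the same.
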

\begin{proof}
Fix $\Vap>0$ and note that, by Assumption~\ref{ass:grad_noise},
\begin{align}
\mathbb{P}\left((t+1)^{-1/2-\delta}\|\bzeta_{t}\|>\Vap\right) & \leq \frac{1}{\Vap^{2}(t+1)^{1+2\delta}}\mathbb{E}\left[\|\bzeta\|^{2}\right]\\
&\leq \frac{C_{1}}{\Vap^{2}(t+1)^{1+2\delta}}.
\label{eq:zeta_decay1}
\end{align}
Since $\delta>0$, the term on the R.H.S. of~\eqref{eq:zeta_decay1} is summable, and by the Borel-Cantelli lemma we may conclude that
\begin{align}
\label{eq:zeta_decay2}
\mathbb{P}\left((t+1)^{-1/2-\delta}\|\bzeta_{t}\|>\Vap~\mbox{i.o.}\right)=0,
\end{align}
where i.o. means infinitely often.
Since $\Vap>0$ is arbitrary, the desired assertion follows.
\end{proof}

The following two technical results from~\cite{kar2013distributed}  will be useful (see also \cite{kar2012distributed}).

\begin{lemma}[Lemma 4.3 in~\cite{kar2013distributed}]
\label{lm:mean-conv} Let $\{\mathbf{z}_{t}\}$ be an $\mathbb{R}_{+}$ valued $\{\mathcal{H}_{t}\}$ adapted process that satisfies
\begin{equation}
\label{lm:mean-conv1}
\mathbf{z}_{t+1}\leq \left(1-r_{1}(t)\right)\mathbf{z}_{t}+r_{2}(t)V_{t}\left(1+J_{t}\right).
\end{equation}
In the above, $\{r_{1}(t)\}$ is an $\{\mathcal{H}_{t+1}\}$ adapted process, such that for all $t$, $r_{1}(t)$ satisfies $0\leq r_{1}(t)\leq 1$ and
\begin{equation}
\label{lm:JSTSP2}
\frac{a_{1}}{(t+1)^{\delta_{1}}}\leq\mathbb{E}\left[r_{1}(t)~|~\mathcal{H}_{t}\right]\leq 1
\end{equation}
with $a_{1}>0$ and $0\leq \delta_{1}< 1$. The sequence $\{r_{2}(t)\}$ is deterministic, $\mathbb{R}_{+}$ valued and satisfies $r_{2}(t)\leq a_{2}/(t+1)^{\delta_{2}}$ with $a_{2}>0$ and $\delta_{2}>0$.
Further, let $\{V_{t}\}$ and $\{J_{t}\}$ be $\mathbb{R}_{+}$ valued $\{\mathcal{H}_{t+1}\}$ adapted processes with $\sup_{t\geq 0}\|V_{t}\|<\infty$ a.s. The process $\{J_{t}\}$ is i.i.d.~with $J_{t}$ independent of $\mathcal{H}_{t}$ for each $t$ and satisfies the moment condition $\mathbb{E}\left[\left\|J_{t}\right\|^{2+\varepsilon_{1}}\right]<\kappa<\infty$ for some $\varepsilon_{1}>0$ and a constant $\kappa>0$. Then, for every $\delta_{0}$ such that
\begin{equation}
\label{lm:mean-conv5}
0\leq\delta_{0}<\delta_{2}-\delta_{1}-\frac{1}{2+\varepsilon_{1}},
\end{equation}
we have $(t+1)^{\delta_{0}}\mathbf{z}_{t}\rightarrow 0$ a.s. as $t\rightarrow\infty$.
\end{lemma}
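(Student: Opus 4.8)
The plan is to read~\eqref{lm:mean-conv1} as a stochastic-approximation rate estimate and to isolate the two independent sources of randomness: the multiplicative forcing $V_t(1+J_t)$ and the random contraction factor $1-r_1(t)$. First I would neutralize the forcing noise pathwise. Since $\sup_t\|V_t\|\le M<\infty$ a.s. for a finite random variable $M$, it suffices to control $J_t$. Exactly as in the proof of Lemma~\ref{lm:zeta_decay}, the moment bound $\mathbb{E}\|J_t\|^{2+\varepsilon_1}<\kappa$ together with Markov's inequality gives $\mathbb{P}(\|J_t\|>(t+1)^{s})\le \kappa(t+1)^{-s(2+\varepsilon_1)}$, which is summable whenever $s>1/(2+\varepsilon_1)$; by Borel--Cantelli, for each such $s$ we have $\|J_t\|\le (t+1)^{s}$ for all large $t$, a.s. (no independence between $V_t$ and $J_t$ is needed, since only the marginal tail is used). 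Combining this with $r_2(t)\le a_2(t+1)^{-\delta_2}$ reduces~\eqref{lm:mean-conv1}, a.s. and for $t$ large, to
\[
\mathbf z_{t+1}\le (1-r_1(t))\,\mathbf z_t + K\,(t+1)^{-\mu},\qquad \mu:=\delta_2-s,
\]
for a finite random constant $K$, where the only remaining randomness sits in $r_1(t)$. Since $s$ may be taken arbitrarily close to $1/(2+\varepsilon_1)$, proving $(t+1)^{\delta_0}\mathbf z_t\to0$ for every $\delta_0<\mu-\delta_1$ will cover the full range~\eqref{lm:mean-conv5}.

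Next I would work with the scaled process $w_t:=(t+1)^{\delta_0}\mathbf z_t$. Multiplying the reduced recursion by $(t+2)^{\delta_0}$ and using $(1+\tfrac1{t+1})^{\delta_0}\le 1+\tfrac{c_0}{t+1}$ gives $w_{t+1}\le (1-r_1(t))(1+\tfrac{c_0}{t+1})\,w_t+\eta_t$ with $\eta_t\lesssim (t+1)^{-(\mu-\delta_0)}$. The crucial point is that $\delta_1<1$, so the conditional contraction rate $\mathbb{E}[r_1(t)\mid\mathcal H_t]\ge a_1(t+1)^{-\delta_1}$ strictly dominates the multiplicative inflation $c_0/(t+1)$ for $t$ large; hence the net conditional drift satisfies $\mathbb{E}[w_{t+1}\mid\mathcal H_t]\le \bigl(1-\tfrac{a_1}{2}(t+1)^{-\delta_1}\bigr)w_t+\eta_t$ eventually. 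Because $\delta_0<\mu-\delta_1$, the forcing-to-contraction ratio obeys $\eta_t\big/(t+1)^{-\delta_1}\lesssim (t+1)^{-(\mu-\delta_0-\delta_1)}\to0$.

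I would then conclude $w_t\to0$ a.s. by an almost-supermartingale (Robbins--Siegmund-type) argument. Fix $\varepsilon>0$. Since $\eta_t/(t+1)^{-\delta_1}\to0$, for $t$ large the forcing is dominated by $\tfrac{\varepsilon}{2}$ times the conditional contraction, so whenever $w_t>\varepsilon$ one has $\mathbb{E}[w_{t+1}\mid\mathcal H_t]\le \bigl(1-\tfrac{a_1}{4}(t+1)^{-\delta_1}\bigr)w_t<w_t$, a strict expected decrease. As $\sum_t(t+1)^{-\delta_1}=\infty$, standard stochastic-approximation/excursion arguments then force $\limsup_t w_t\le\varepsilon$ a.s.; letting $\varepsilon\downarrow0$ gives $w_t\to0$ a.s., and letting $s\downarrow 1/(2+\varepsilon_1)$ delivers the asserted range of $\delta_0$.

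The hard part is the random contraction factor $r_1(t)$: we control only the conditional mean $\mathbb{E}[r_1(t)\mid\mathcal H_t]$, never $r_1(t)$ itself. The naive pathwise estimate $\prod_{k}(1-r_1(k))\le\exp\!\bigl(-\sum_k r_1(k)\bigr)$ is too lossy, because the martingale fluctuations of $\sum_k r_1(k)$ are of order $t^{1/2}$, which can swamp the deterministic decay $t^{1-\delta_1}$ once $\delta_1\ge 1/2$. This is precisely why the argument must proceed through the conditional-drift / almost-supermartingale route rather than a direct product bound, and why upgrading an $L^1$/in-probability rate to the claimed almost-sure rate is the delicate step. This last point is exactly the content of the rate lemmas in~\cite{kar2013distributed} (see also~\cite{kar2012distributed}), which I would either invoke directly or reprove via the excursion argument sketched above.
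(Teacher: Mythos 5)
First, a point of reference: the paper does not actually prove this statement---it is imported verbatim as Lemma 4.3 of \cite{kar2013distributed}, with Remark~\ref{rem:lm:mean-conv} noting only that the cited proof is \emph{pathwise} and therefore tolerates $\{\mathcal{H}_{t+1}\}$-adapted $V_t$. Your opening move is exactly right and matches that reference: Markov's inequality applied to $\mathbb{E}\|J_t\|^{2+\varepsilon_1}<\kappa$ plus Borel--Cantelli gives $\|J_t\|\leq(t+1)^{s}$ for all large $t$ a.s., for any $s>1/(2+\varepsilon_1)$, reducing \eqref{lm:mean-conv1} to $\mathbf{z}_{t+1}\leq(1-r_1(t))\mathbf{z}_t+K(t+1)^{-(\delta_2-s)}$ with a finite random constant $K$, and letting $s\downarrow 1/(2+\varepsilon_1)$ recovers the range \eqref{lm:mean-conv5}.

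The second half, however, has a genuine gap. The conditional-drift/supermartingale route requires the forcing term $\eta_t$ to be $\mathcal{H}_t$-measurable (or at least to have controlled conditional expectation given $\mathcal{H}_t$), but your $\eta_t$ is built from $K$, which aggregates $\sup_t V_t$ and the a.s.-finite Borel--Cantelli index; neither is adapted, so the inequality $\mathbb{E}[w_{t+1}\mid\mathcal{H}_t]\leq(1-\tfrac{a_1}{2}(t+1)^{-\delta_1})w_t+\eta_t$ does not follow from \eqref{lm:JSTSP2} as written, and localizing on $\{K\leq m\}$ does not preserve the supermartingale property because that event is not in $\mathcal{H}_t$. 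Moreover, the obstacle you invoke to justify abandoning the pathwise product bound is not real: since $0\leq r_1(k)\leq 1$, the conditional variance of $r_1(k)$ is bounded by its conditional mean, so the quadratic variation of the martingale $M_t=\sum_{k\leq t}\bigl(r_1(k)-\mathbb{E}[r_1(k)\mid\mathcal{H}_k]\bigr)$ is dominated by the compensator $A_t=\sum_{k\leq t}\mathbb{E}[r_1(k)\mid\mathcal{H}_k]\geq a_1\sum_{k\leq t}(k+1)^{-\delta_1}$, and the martingale strong law gives $M_t=o(A_t)$, not $O(t^{1/2})$. Hence $\sum_{k\leq t}r_1(k)\geq(1-o(1))A_t\gtrsim t^{1-\delta_1}$ and $\prod_{k}(1-r_1(k))\leq\exp(-c\,t^{1-\delta_1})$ a.s.\ for \emph{every} $\delta_1<1$, which beats any polynomial weight $(t+1)^{\delta_0}$; unrolling the reduced recursion $\omega$-by-$\omega$ and estimating the convolution of these products against $K(k+1)^{-(\delta_2-s)}$ (this is where the extra $-\delta_1$ in \eqref{lm:mean-conv5} is paid) then closes the proof with no adaptedness issue. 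That is precisely the pathwise analysis of the cited lemma, so your fallback of ``invoking the rate lemmas of \cite{kar2013distributed} directly'' is circular---the statement to be proved \emph{is} one of those rate lemmas.
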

\begin{remark}
\label{rem:lm:mean-conv} Note that Lemma 4.3 in~\cite{kar2013distributed} assumes $V_{t}$ above is $\{\mathcal{H}_{t}\}$ adapted, but the proof in~\cite{kar2013distributed} uses a pathwise analysis and can be readily adjusted to the case of $\{\mathcal{H}_{t+1}\}$ adapted $V_{t}$.
\end{remark}

In $\mathbb{R}^{Nd}$, denote by $\mathcal{C}$ the consensus subspace,
\begin{align}
\label{eq:cons_subs}
\mathcal{C} \dot=\left\{\mathbf{z}\in\mathbb{R}^{Nd}~:~\mathbf{z}=\mathbf{1}_{N}\otimes\mathbf{a}~\mbox{for some $\mathbf{a}\in\mathbb{R}^{d}$}\right\},
\end{align}
and denote by $\mathcal{C}^{\perp}$ its orthogonal subspace in $\mathbb{R}^{Nd}$.
\begin{lemma}[Lemma 4.4 in~\cite{kar2013distributed}] \label{lm:conn} Let $\{\mathbf{z}_{t}\}$ be an $\mathbb{R}^{Nd}$ valued $\{\mathcal{H}_{t}\}$ adapted process such that $\mathbf{z}_{t}\in\mathcal{C}^{\perp}$ for all $t$. Also, let $\{L_{t}\}$ be an i.i.d.~sequence of Laplacian matrices as in Assumption~\ref{ass:conn} that satisfies
\begin{equation}
\label{Lap_cond}
=\lambda_{2}\left(\mathbb{E}[L_{t}]\right)=\lambda_{2}(\overline{L})>0,
\end{equation}
with $L_{t}$ being $\mathcal{H}_{t+1}$ adapted and independent of $\mathcal{H}_{t}$ for all $t$. Then there exists a measurable $\{\mathcal{H}_{t+1}\}$ adapted $\mathbb{R}_{+}$ valued process $\{r_{t}\}$ (depending on $\{\mathbf{z}_{t}\}$ and $\{L_{t}\}$) and a constant $c_{r}>0$, such that $0\leq r_{t}\leq 1$ a.s. and
\begin{equation}
\label{lm:conn20}
\left\|\left(I_{Nd}-\beta_{t}L_{t}\otimes I_{d}\right)\mathbf{z}_{t}\right\|\leq\left(1-r_{t}\right)\left\|\mathbf{z}_{t}\right\|
\end{equation}
with
\begin{equation}
\label{lm:conn2}
\mathbb{E}\left[r_{t}~|~\mathcal{H}_{t}\right]\geq\frac{c_{r}}{(t+1)^{\tau_{\beta}}}~~\mbox{a.s.}
\end{equation}
for all $t$ large enough, where the weight sequence $\{\beta_{t}\}$ and $\tau_{\beta}$ are defined in~Assumption~\ref{ass:weights}.
\end{lemma}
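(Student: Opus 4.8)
The plan is to extract contraction on $\mathcal{C}^{\perp}$ from the mean connectivity, since no individual realization $L_t$ need be connected. Write $M_t \dot= I_{Nd} - \beta_t (L_t \otimes I_d)$. Because there are only finitely many simple graphs on $N$ nodes, the Laplacian spectra are uniformly bounded: there is a deterministic $\lambda_{\max}$ with $0 \le \lambda_N(L_t) \le \lambda_{\max}$ a.s. Moreover, since $\overline{L}$ is connected in the mean, $\ker(\overline{L}\otimes I_d)$ is exactly the consensus subspace $\mathcal{C}$, so for $\mathbf{z}_t \in \mathcal{C}^{\perp}$ one has the spectral-gap bound $\mathbf{z}_t^\top (\overline{L}\otimes I_d)\mathbf{z}_t \ge \lambda_2(\overline{L}) \|\mathbf{z}_t\|^2$. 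I would define $r_t$ pathwise by $r_t \dot= 1 - \|M_t \mathbf{z}_t\|/\|\mathbf{z}_t\|$ on $\{\mathbf{z}_t \neq 0\}$ and $r_t \dot= 0$ otherwise; this is $\{\mathcal{H}_{t+1}\}$-measurable, lies in $[0,1]$ once $M_t$ is a contraction, and makes \eqref{lm:conn20} hold with equality by construction.

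First I would expand, for $\mathbf{z}_t \neq 0$,
\[
\frac{\|M_t\mathbf{z}_t\|^2}{\|\mathbf{z}_t\|^2} = 1 - x_t, \qquad x_t \dot= \frac{2\beta_t\, \mathbf{z}_t^\top (L_t\otimes I_d)\mathbf{z}_t - \beta_t^2\|(L_t\otimes I_d)\mathbf{z}_t\|^2}{\|\mathbf{z}_t\|^2}.
\]
Using positive semidefiniteness of $L_t$ together with the operator inequality $(L_t\otimes I_d)^2 \preceq \lambda_{\max}(L_t\otimes I_d)$ gives $\|(L_t\otimes I_d)\mathbf{z}_t\|^2 \le \lambda_{\max}\,\mathbf{z}_t^\top(L_t\otimes I_d)\mathbf{z}_t$; hence for $t$ large enough that $\beta_t\lambda_{\max} \le 1$ one checks $0 \le x_t \le 1$, which both confirms that $M_t$ is a contraction and lets me apply the elementary bound $1 - \sqrt{1-x} \ge x/2$ valid on $[0,1]$. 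This yields the pathwise estimate $r_t \ge x_t/2$.

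The crux is to pass to conditional expectation. Since $\mathbf{z}_t$ is $\mathcal{H}_t$-measurable and $L_t$ is independent of $\mathcal{H}_t$, taking $\mathbb{E}[\cdot\,|\,\mathcal{H}_t]$ of $r_t \ge x_t/2$ replaces $L_t$ by $\overline{L}$ in the linear term, so
\[
\mathbb{E}[r_t\,|\,\mathcal{H}_t] \ge \beta_t\frac{\mathbf{z}_t^\top(\overline{L}\otimes I_d)\mathbf{z}_t}{\|\mathbf{z}_t\|^2} - \frac{\beta_t^2}{2}\frac{\mathbb{E}[\|(L_t\otimes I_d)\mathbf{z}_t\|^2\,|\,\mathcal{H}_t]}{\|\mathbf{z}_t\|^2} \ge \beta_t\lambda_2(\overline{L}) - \frac{\beta_t^2\lambda_{\max}^2}{2}.
\]
Because $\beta_t = c_\beta/t^{\tau_\beta} \to 0$, the quadratic term is eventually dominated by the linear one, so for $t$ large $\mathbb{E}[r_t\,|\,\mathcal{H}_t] \ge \tfrac12\beta_t\lambda_2(\overline{L}) \ge \tfrac12 c_\beta\lambda_2(\overline{L})/(t+1)^{\tau_\beta}$, which is \eqref{lm:conn2} with $c_r = \tfrac12 c_\beta\lambda_2(\overline{L})$.

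I expect the main obstacle to be conceptual rather than computational: because a single $L_t$ may be disconnected there is no deterministic contraction of $M_t$ on $\mathcal{C}^{\perp}$, so the contraction must be recovered only after conditioning on $\mathcal{H}_t$. The delicate points are therefore (i) linearizing the square root through $r_t \ge x_t/2$ so that the favorable first-order term survives the conditional expectation while the unfavorable $\beta_t^2$ term is controlled by the uniform spectral bound $\lambda_{\max}$, and (ii) justifying the step that sends $L_t \mapsto \overline{L}$, which relies on the independence of $L_t$ from $\mathcal{H}_t$ and the $\mathcal{H}_t$-measurability of $\mathbf{z}_t$ granted by the hypotheses.
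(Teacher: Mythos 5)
The paper does not actually prove this lemma---it imports it verbatim as Lemma 4.4 of \cite{kar2013distributed} and points the reader there---so there is no in-paper proof to compare against. Your self-contained argument is essentially the standard one used to establish results of this type in the consensus+innovations literature, and it is correct: the expansion $\|M_t\mathbf{z}_t\|^2/\|\mathbf{z}_t\|^2 = 1-x_t$, the operator bound $(L_t\otimes I_d)^2 \preceq \lambda_{\max}(L_t\otimes I_d)$ to control the $\beta_t^2$ term, the elementary inequality $1-\sqrt{1-x}\geq x/2$ on $[0,1]$, and the passage $L_t\mapsto\overline{L}$ via independence of $L_t$ from $\mathcal{H}_t$ together with $\mathcal{H}_t$-measurability of $\mathbf{z}_t$ are all sound, and the spectral-gap bound $\mathbf{z}_t^\top(\overline{L}\otimes I_d)\mathbf{z}_t\geq\lambda_2(\overline{L})\|\mathbf{z}_t\|^2$ on $\mathcal{C}^{\perp}$ is exactly right since $\ker(\overline{L}\otimes I_d)=\mathcal{C}$ under mean connectivity.

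One small edge case needs patching: on the event $\{\mathbf{z}_t=\mathbf{0}\}$ your construction gives $r_t=0$, and since this event is $\mathcal{H}_t$-measurable you would get $\mathbb{E}[r_t\,|\,\mathcal{H}_t]=0$ there, violating the lower bound in \eqref{lm:conn2}. The fix is immediate---on $\{\mathbf{z}_t=\mathbf{0}\}$ define $r_t$ to be the deterministic quantity $\min\{1,\tfrac12\beta_t\lambda_2(\overline{L})\}$, which keeps \eqref{lm:conn20} trivially true and restores \eqref{lm:conn2}. Similarly, for small $t$ where $\beta_t\lambda_{\max}>2$ your $r_t$ could be negative; truncating at $0$ (or simply invoking the ``for all $t$ large enough'' qualifier in the statement) handles this, as you already implicitly note.
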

See \cite{kar2013distributed} for a detailed discussion of the necessity of the various technicalities involved in the statement of Lemma~\ref{lm:conn}.

\subsection{Consensus} \label{sec:consensus}
The following lemma shows that, a.s., the algorithm \eqref{eq:vec_update} obtains consensus asymptotically.
\begin{lemma}[Convergence to Consensus Subspace]
\label{lm:mean_conv_as} Let Assumptions~\ref{ass:Lip}-\ref{ass:weights} hold. Let $\{x_t\}$ satisfy \eqref{eq:vec_update} with arbitrary initial condition. Then, for every $\tau\in [0,1/2-\tau_{\beta})$, we have that
\begin{align}
\label{lm:mean_conv_as1}
\mathbb{P}\left(\lim_{t\rightarrow\infty}(t+1)^{\tau}\left\|\mathbf{x}_{n}(t)-\obx_{t}\right\|=0\right)=1,~~~\forall n,
\end{align}
where $\obx_{t}$ is the network-averaged process, $\obx_{t}=(1/N)\sum_{l=1}^{N}\mathbf{x}_{l}(t)$.
\end{lemma}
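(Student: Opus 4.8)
The plan is to track the consensus error through its projection onto the complement of the consensus subspace and to show that its norm obeys a recursion of exactly the form treated by Lemma~\ref{lm:mean-conv}. Let $P\doteq (I_N-\tfrac1N\mathbf 1_N\mathbf 1_N^\top)\otimes I_d$ be the orthogonal projection onto $\PC$, and set $\mathbf z_t\doteq P\mathbf x_t$, so that $\mathbf z_t=\vecc(\mathbf x_n(t)-\obx_t)\in\PC$ and $\|\mathbf z_t\|^2=\sum_n\|\mathbf x_n(t)-\obx_t\|^2$. Since each Laplacian satisfies $L_t\mathbf 1_N=0$, the operator $L_t\otimes I_d$ annihilates $\MC$ and commutes with $P$, whence $P(L_t\otimes I_d)\mathbf x_t=(L_t\otimes I_d)\mathbf z_t$. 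Applying $P$ to \eqref{eq:vec_update} therefore gives
\begin{equation}
\mathbf z_{t+1}=\left(I_{Nd}-\beta_t L_t\otimes I_d\right)\mathbf z_t-\alpha_t P\left(\nabla\bU(\mathbf x_t)+\bzeta_t\right)+\gamma_t P\mathbf w_t .
\end{equation}
Because $\mathbf z_t\in\PC$, Lemma~\ref{lm:conn} supplies an $\{\calH_{t+1}\}$-adapted $r_t\in[0,1]$ with $\E[r_t\mid\calH_t]\ge c_r(t+1)^{-\tau_\beta}$ and $\|(I_{Nd}-\beta_tL_t\otimes I_d)\mathbf z_t\|\le(1-r_t)\|\mathbf z_t\|$, so taking norms yields
\begin{equation}
\|\mathbf z_{t+1}\|\le(1-r_t)\|\mathbf z_t\|+\alpha_t\|P\nabla\bU(\mathbf x_t)\|+\alpha_t\|\bzeta_t\|+\gamma_t\|P\mathbf w_t\| .
\end{equation}

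The crux is bounding the innovation projection $\|P\nabla\bU(\mathbf x_t)\|$ with no a priori control on $\|\mathbf x_t\|$. I would exploit Assumptions~\ref{ass:Lip} and~\ref{ass:similarity} through the recentering $\nabla U_n(\mathbf x_n(t))=\nabla U_n(\obx_t)+\bigl(\nabla U_n(\mathbf x_n(t))-\nabla U_n(\obx_t)\bigr)$. Projecting the first group removes the common component and leaves $\vecc(\nabla U_n(\obx_t)-\nabla U(\obx_t))$, which Assumption~\ref{ass:similarity} bounds by a constant $B$ uniformly in $t$; the second group is controlled by the Lipschitz Assumption~\ref{ass:Lip} as $\|\vecc(\nabla U_n(\mathbf x_n(t))-\nabla U_n(\obx_t))\|\le L\|\mathbf z_t\|$. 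Hence $\|P\nabla\bU(\mathbf x_t)\|\le B+L\|\mathbf z_t\|$, and the expansion term $\alpha_tL\|\mathbf z_t\|$ can be merged with the contraction: writing $1-r_t+\alpha_tL=1-(r_t-\alpha_tL)$ and noting $\alpha_tL=O(1/t)=o((t+1)^{-\tau_\beta})$ since $\tau_\beta<1$, the coefficient $r_1(t)\doteq r_t-\alpha_tL$ satisfies $\E[r_1(t)\mid\calH_t]\ge\tfrac{c_r}{2}(t+1)^{-\tau_\beta}$ for $t$ large, matching Lemma~\ref{lm:mean-conv} with $\delta_1=\tau_\beta$. (The pathwise positivity $r_1(t)\in[0,1]$ is a routine adjustment, handled as in~\cite{kar2013distributed}; cf.\ Remark~\ref{rem:lm:mean-conv}.)

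It then remains to cast the residual forcing $\alpha_tB+\alpha_t\|\bzeta_t\|+\gamma_t\|P\mathbf w_t\|$ in the form $r_2(t)V_t(1+J_t)$. Fix an arbitrarily small $\delta>0$ and take $r_2(t)\doteq(t+1)^{-(1/2-\delta)}$, so $\delta_2=1/2-\delta$. Assign the annealing term to $J_t\doteq\|P\mathbf w_t\|$, which by Assumption~\ref{ass:gauss} is i.i.d., independent of $\calH_t$, and Gaussian, hence has moments of every order; consequently the exponent $\varepsilon_1$ in Lemma~\ref{lm:mean-conv} may be taken arbitrarily large. The deterministic and gradient-noise pieces are absorbed into
\begin{equation}
V_t\doteq c_\gamma+(t+1)^{1/2-\delta}\alpha_tB+(t+1)^{1/2-\delta}\alpha_t\|\bzeta_t\|,
\end{equation}
which is $\{\calH_{t+1}\}$-adapted with $\sup_tV_t<\infty$ a.s.: indeed $(t+1)^{1/2-\delta}\alpha_t\asymp(t+1)^{-1/2-\delta}$, so the last term equals $(t+1)^{-1/2-\delta}\|\bzeta_t\|$ up to constants, which tends to $0$ a.s.\ by Lemma~\ref{lm:zeta_decay}. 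Since $(t+1)^{1/2-\delta}\gamma_t\le c_\gamma$ for $t$ large, one checks directly that $V_t(1+J_t)\ge(t+1)^{1/2-\delta}\bigl(\alpha_tB+\alpha_t\|\bzeta_t\|+\gamma_t\|P\mathbf w_t\|\bigr)$, as required. Lemma~\ref{lm:mean-conv} then gives $(t+1)^{\delta_0}\|\mathbf z_t\|\to0$ a.s.\ for every $\delta_0<(1/2-\delta)-\tau_\beta-\tfrac{1}{2+\varepsilon_1}$; letting $\delta\downarrow0$ and $\varepsilon_1\uparrow\infty$ exhausts $[0,1/2-\tau_\beta)$, and choosing $\delta,\varepsilon_1$ so that $\delta_0=\tau$ together with $\|\mathbf x_n(t)-\obx_t\|\le\|\mathbf z_t\|$ yields the claim.

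I expect the main obstacle to be the innovation projection bound: the gradients $\nabla U_n$ are only Lipschitz, so $\|\nabla\bU(\mathbf x_t)\|$ may grow linearly in $\|\mathbf x_t\|$, which is not controlled a priori. The key is that projecting onto $\PC$ and recentering at the common point $\obx_t$ converts this into a uniformly bounded term (via the dissimilarity Assumption~\ref{ass:similarity}) plus a term proportional to the consensus error itself (via Assumption~\ref{ass:Lip}), the latter being absorbable into the contraction because $\alpha_t$ decays strictly faster than the connectivity-induced rate $(t+1)^{-\tau_\beta}$. A secondary point requiring care is the rate bookkeeping: extracting the full range $\tau<1/2-\tau_\beta$ relies on the annealing noise having finite moments of every order, so that the penalty $\tfrac{1}{2+\varepsilon_1}$ in Lemma~\ref{lm:mean-conv} can be driven to zero, while the gradient noise is quarantined into the bounded process $V_t$ by Lemma~\ref{lm:zeta_decay}.
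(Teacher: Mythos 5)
Your proposal is correct and follows essentially the same route as the paper's proof: the same projection onto $\PC$, the same recentering of the gradients at $\obx_t$ to combine Assumptions~\ref{ass:Lip} and~\ref{ass:similarity} into a bound of the form $B+L\|\mathbf z_t\|$, the same absorption of $\alpha_t L\|\mathbf z_t\|$ into the Lemma~\ref{lm:conn} contraction, and the same invocation of Lemma~\ref{lm:mean-conv} with $\delta_1=\tau_\beta$, $\delta_2=1/2-\delta$, followed by letting $\delta\downarrow 0$ and $\varepsilon_1\uparrow\infty$. The only differences are cosmetic (you apply the projector $P$ to the vectorized update rather than subtracting the averaged recursion, and your $J_t=\|P\mathbf w_t\|$ plays the role of the paper's $\|T_3\|$), so no further comparison is needed.
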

\begin{proof}
Noting that $\left(\mathbf{1}_{N}\otimes I_{d}\right)^{\top}\left(L_{t}\otimes I_{d}\right)=\mathbf{0}$ (by the properties of the undirected Laplacian), we have by~\eqref{eq:vec_update},
\begin{align}
\label{lm:mean_conv_as2}
\obx_{t+1}=\obx_{t}-\alpha_{t}\left(\frac{1}{N}\sum_{n=1}^{N}\nabla U_{n}(\mathbf{x}_{n}(t))+\obzeta_{t}\right)+\gamma_{t}\omw_{t},
\end{align}
where
\begin{align}
\label{lm:mean_conv_as3}
\obzeta_{t}=\frac{1}{N}\sum_{n=1}^{N}\bzeta_{n}(t),~~\mbox{and}~~\omw_{t}=\frac{1}{N}\sum_{n=1}^{N}\mathbf{w}_{n}(t).
\end{align}
Denote by $\{\brx_{t}\}$ the process, $\brx_{t}=\mathbf{x}_{t}-\mathbf{1}_{N}\otimes\obx_{t}$, for all $t\geq 0$, and note that
\begin{align}
\label{special1}
\mathcal{P}_{Nd}\brx_{t}=\mathbf{0},~~\forall t,
\end{align}
since $\brx_{t}\in\mathcal{C}^{\perp}$, where recall $\mathcal{C}^{\perp}$ is the orthogonal complement of the consensus subspace (see~\eqref{eq:cons_subs}) and $\mathcal{P}_{Nd}=(1/N)\left(\mathbf{1}_{N}\otimes I_{d}\right)\left(\mathbf{1}_{N}\otimes I_{d}\right)^{\top}$.

By~\eqref{eq:vec_update} and~\eqref{lm:mean_conv_as2} we have
\begin{align}
\nonumber
\brx_{t+1} & = \left(I_{Nd}-\beta_{t}\left(L_{t}\otimes I_{d}\right)\right)\brx_{t} \\
\label{lm:mean_conv_as4} &
-\alpha_{t}\underbrace{\begin{bmatrix}\nabla U_{1}(\mathbf{x}_{1}(t))-\frac{1}{N}\sum_{l=1}^{N}\nabla U_{l}(\mathbf{x}_{l}(t)) \\  \nabla U_{2}(\mathbf{x}_{2}(t))-\frac{1}{N}\sum_{l=1}^{N}\nabla U_{l}(\mathbf{x}_{l}(t)) \\ \vdots \\ \vdots \\ \nabla U_{N}(\mathbf{x}_{N}(t))-\frac{1}{N}\sum_{l=1}^{N}\nabla U_{l}(\mathbf{x}_{l}(t))
\end{bmatrix}}_{\mathrm{\mbox{$T_{1}$}}} \\
&-\alpha_{t}\underbrace{\left(\bzeta_{t}-\mathbf{1}_{N}\otimes\obzeta_{t}\right)}_{\mathrm{\mbox{$T_{2}$}}}+\gamma_{t}\underbrace{\left(\mathbf{w}_{t}-\mathbf{1}_{N}\otimes\omw_{t}\right)}_{\mathrm{\mbox{$T_{3}$}}}
\end{align}
for all $t\geq 0$. (For convenience, we suppress the time index on the $T_i$ terms.) Now, consider the $n$-th component of the term $T_{1}$,
\begin{align}
\label{lm:mean_conv_as5}
T_{1}^{n}\doteq \nabla U_{n}(\mathbf{x}_{n}(t))-\frac{1}{N}\sum_{l=1}^{N}\nabla U_{l}(\mathbf{x}_{l}(t)),
\end{align}
and note that $T_{1}^{n}$ may be decomposed as
\begin{align}
\label{lm:mean_conv_as6}
T_{1}^{n} & =\left(\nabla U_{n}(\mathbf{x}_{n}(t))-\nabla U_{n}(\obx_{t})\right)\\
& +\left(\nabla U_{n}(\obx_{t})-\frac{1}{N}\sum_{l=1}^{N}\nabla U_{l}(\obx_{t})\right)\\
& +\left(\frac{1}{N}\sum_{l=1}^{N}\nabla U_{l}(\obx_{t})-\frac{1}{N}\sum_{l=1}^{N}\nabla U_{l}(\mathbf{x}_{l}(t))\right).
\end{align}
For the second term on the R.H.S. of~\eqref{lm:mean_conv_as6}, note that, by Assumption~\ref{ass:similarity}, there exists a constant $c_{1}>0$ such that
\begin{align}
\label{lm:mean_conv_as7}
\left\|\nabla U_{n}(\obx_{t})-\frac{1}{N}\sum_{l=1}^{N}\nabla U_{l}(\obx_{t})\right\|=\left\|\nabla U_{n}(\obx_{t})-\nabla U(\obx_{t})\right\|\leq c_{1}.
\end{align}
Finally, by the Lipschitz continuity of the gradients (see Assumption~\ref{ass:Lip}), we have, for a constant $c_{2}>0$ large enough,
\begin{align}
\label{lm:mean_conv_as8}
\left\|T_{1}^{n}\right\|\leq c_{1}+c_{2}\sum_{l=1}^{N}\left\|\mathbf{x}_{l}(t)-\obx_{t}\right\|.
\end{align}
Hence, there exist constants $c_{3}, c_{4}>0$ such that
\begin{align}
\label{lm:mean_conv_as9}
\left\|T_{1}\right\|\leq c_{3}+c_{4}\left\|\mathbf{x}_{t}-\mathbf{1}_{N}\otimes\obx_{t}\right\|=c_{3}+c_{4}\left\|\brx_{t}\right\|.
\end{align}
For the term $T_{2}$ in~\eqref{lm:mean_conv_as4}, consider $\delta\in (0, 1/2)$ arbitrarily small. Consider the process $\{R_{t}\}$, defined as $R_{t}=(t+1)^{-1/2-\delta}\left\|\bzeta_{t}-\mathbf{1}_{N}\obzeta_{t}\right\|$ for all $t$, and note that by Lemma~\ref{lm:zeta_decay} we have $R_{t}\rightarrow 0$ as $t\rightarrow\infty$ a.s. Since $t^{-1}\leq 2(t+1)^{-1}$ for all $t>0$, we have (see Assumption~\ref{ass:weights})
\begin{align}
\label{lm:mean_conv_as10}
\left\|\alpha_t T_{2}\right\|=\alpha_{t}(t+1)^{1/2+\delta}R_{t}\leq \frac{2c_\alpha}{(t+1)^{1/2-\delta}}R_{t},~~\mbox{for $t$ large.}
\end{align}
Similarly, note that,
\begin{align}
\label{lm:mean_conv_as11}
\left\|\gamma_{t}T_{3}\right\| & \leq \frac{2c_\gamma\|T_3\|}{(t+1)^{1/2}\sqrt{\log\log t}}\\
& \leq \frac{2c_\gamma}{(t+1)^{1/2-\delta}}\left\|T_{3}\right\|,~~\mbox{for $t$ large.}
\end{align}
Noting that $\|T_{3}\|$ has moments of all order (by the Gaussianity of the $\mathbf{w}_{t}$'s), by~\eqref{lm:mean_conv_as10}-\eqref{lm:mean_conv_as11} we conclude that there exist $\mathbb{R}_{+}$-valued $\{\mathcal{H}_{t+1}\}$-adapted processes $\{V_{t}\}$ and $\{J_{t}\}$ such that
\begin{align}
\label{lm:mean_conv_as12}
\left\|\alpha_{t} T_{2}\right\|+\left\|\gamma_{t}T_{3}\right\|\leq \frac{1}{(t+1)^{1/2-\delta}}V_{t}\left(1+J_{t}\right),~~\mbox{for $t$ large},
\end{align}
with $\{V_{t}\}$ being bounded a.s. and $\{J_{t}\}$ possessing moments of all orders.

Since $\brx_{t}\in\mathcal{C}^{\perp}$ for all $t\geq 0$, by Lemma~\ref{lm:conn} there exists a $\{\mathcal{H}_{t+1}\}$ adapted $\mathbb{R}_{+}$ valued process $\{r_{t}\}$ and a constant $c_{5}>0$ such that $0\leq r_{t}\leq 1$ a.s. and
\begin{align}
\label{lm:mean_conv_as13}
\left\|I_{Nd}-\beta_{t}\left(L_{t}\otimes I_{d}\right)\brx_{t}\right\|\leq \left(1-r_{t}\right)\left\|\brx_{t}\right\|
\end{align}
with
\begin{align}
\label{lm:mean_conv_as14}
\mathbb{E}\left[r_{t}~|~\mathcal{H}_{t}\right]\geq\frac{c_{5}}{(t+1)^{\tau_{\beta}}}~~\mbox{a.s.}
\end{align}
for all $t$ large enough.

Thus, by \eqref{lm:mean_conv_as4}, \eqref{lm:mean_conv_as9}, \eqref{lm:mean_conv_as12}, and \eqref{lm:mean_conv_as13} we obtain
\begin{align}
\label{lm:mean_conv_as15}
\left\|\brx_{t+1}\right\|\leq & \left(1-r_{t}\right)\left\|\brx_{t}\right\|\\
& +\alpha_{t}c_{3}+\alpha_{t}\left\|\brx_{t}\right\|+\frac{1}{(t+1)^{1/2-\delta}}V_{t}\left(1+J_{t}\right)
\end{align}
for $t$ large. Denote by $\{\br_{t}\}$ the process given by, $\br_{t}=r_{t}-\alpha_{t}$ for all $t$, and note that, since $\tau_{\beta}<1$, by~\eqref{lm:mean_conv_as14} there exists a constant $c_{6}>0$ such that
\begin{align}
\label{lm:mean_conv_as17}
\mathbb{E}\left[\br_{t}~|~\mathcal{H}_{t}\right]\geq\frac{c_{6}}{(t+1)^{\tau_{\beta}}}~~\mbox{a.s.}
\end{align}
for all $t$ large enough. Noting that $\alpha_{t}=c_\alpha t^{-1}\leq 2(t+1)^{-1/2+\delta}$ for all $t$ large, by~\eqref{lm:mean_conv_as15} we have
\begin{align}
\label{lm:mean_conv_as18}
\left\|\brx_{t+1}\right\|\leq\left(1-\br_{t}\right)\left\|\brx_{t}\right\|+\frac{c_{7}}{(t+1)^{1/2-\delta}}V_{t}\left(1+J_{t}\right)
\end{align}
for $t$ large and a constant $c_{7}>0$ sufficiently large. By~\eqref{lm:mean_conv_as17} and the above development, the recursion in~\eqref{lm:mean_conv_as18} clearly falls under the purview of Lemma~\ref{lm:mean-conv}, and we conclude that (by taking $\delta_{2}=1/2-\delta$ and $\delta_{1}=\tau_{\beta}$ in Lemma~\ref{lm:mean-conv}) for all
$\tau$ and $\varepsilon_{1}>0$ such that
\begin{align}
\label{lm:mean_conv_as19}
0\leq\tau<\frac{1}{2}-\delta-\tau_{\beta}-\frac{1}{2+\varepsilon_{1}},
\end{align}
we have $(t+1)^{\tau}\brx_{t}\rightarrow 0$ a.s. as $t\rightarrow\infty$. By taking $\varepsilon_{1}\rightarrow\infty$ (since $J_{t}$ has moments of all orders) and $\delta\rightarrow 0$, we conclude that $(t+1)^{\tau}\brx_{t}\rightarrow 0$ a.s. as $t\rightarrow\infty$ for all $\tau\in (0, 1/2-\tau_{\beta})$.
\end{proof}

\subsection{Classical Results: Recursive Algorithms for Global Optimization} \label{sec:GM-result}
We will now briefly review classical results on global optimization from~\cite{gelfand1991recursive} that will be used in the proof of our main result.

Consider the following stochastic recursion in $\mathbb{R}^{d}$:
\begin{align}
\label{eq:GM_alg}
\mathbf{z}_{t+1}=\mathbf{z}_{t}-a_{t}\left(\nabla U(\mathbf{z}_{t})+\bxi_{t}\right)+b_{t}\mathbf{w}_{t},~~~t\geq 0,
\end{align}
where $U:\mathbb{R}^{d}\to\mathbb{R}_{+}$, $\{\bxi_{t}\}$ is a sequence of $\R^d$-valued random variables, $\{\mathbf{w}_{t}\}$ is a sequence of independent $d$-dimensional Gaussian random variables with mean zero and covariance $I_{d}$, and
\begin{align}
\label{eq:GM_alg1}
a_{t}=\frac{A}{t},~~b^{2}_{t}=\frac{B}{t\log\log t},~~~\mbox{for $t$ large},
\end{align}
where $A,B>0$ are constants.

Consider the following assumptions on $U(\cdot)$, the gradient field $\nabla U(\cdot)$, and noise $\bxi_t$:
\begin{assumption}
\label{ass:GM_1} $U:\R^d\to\R$ is a $C^2$ function such that
\begin{enumerate}
\item[(i)] $\min U(x) = 0$,
\item[(ii)] $U(x)\to\infty$ and $|\nabla U(x)|\to\infty$ as $|x|\to\infty$,
\item[(iii)] $\inf(|\nabla U(x)|^2 - \Delta U(x) ) > -\infty$.
\end{enumerate}
\end{assumption}
We note that Within the context of PAC learning, the assumption (i) above corresponds to the ``realizability'' assumption, i.e., there exists a true (but unknown) hypothesis that accurately represents that data.
\begin{assumption}
\label{ass:GM_2} For $\e>0$ let
$$
d\pi^\e(x) = \frac{1}{Z^\e}\exp\left(-\frac{2U(x)}{\e^2} \right)\dx,
$$
$$
Z^\e= \int\exp\left(-\frac{2U(x)}{\e^2} \right)\dx.
$$
$U$ is such that $\pi^\e$ has a weak limit $\pi$ as $\e\to 0$.
\end{assumption}
We note that $\pi$ is constructed so as to place mass 1 on the set of global minima of $U$. A discussion of simple conditions ensuring the existence of such a $\pi$ can be found in \cite{hwang1980laplace}.
\begin{assumption}
\label{ass:GM_3} $\liminf_{|x|\to\infty}\langle \frac{\nabla U(x)}{|\nabla U(x)|}, \frac{x}{|x|} \rangle \geq C(d)$,
$C(d) = \left( \frac{4d-4}{4d-3} \right)^{\frac{1}{2}}$.
\end{assumption}
\begin{assumption}
\label{ass:GM_5} $\liminf_{|x|\to\infty} \frac{|\nabla U(x)|}{|x|} > 0$
\end{assumption}
\begin{assumption}
\label{ass:GM_6}
$\limsup_{|x|\to\infty} \frac{|\nabla U(x)|}{|x|} < \infty$
\end{assumption}

Let $\{\calG_t\}$ be the natural filtration generated by \eqref{eq:GM_alg}; that is, $\calG_t$, $t\geq 0$ is given by
$$
\calG_t = \sigma(\{\vx_0, \bxi_1,\ldots,\bxi_{t-1},\vw_1,\ldots,\vw_{t-1} \})
$$
\begin{assumption}
\label{ass:GM_4}
There exists $C_1>0$ such that
$$
\E\left( \left\|\bxi_t\right\|^2\vert \calG_t \right) \leq C_1 a_t^{\gamma_1}, \quad \left\|E(\xi_t\vert\calG_t)\right\| \leq C_1 a_t^{\gamma_2}~~ a.s.
$$
with $\gamma_1 > -1$ and $\gamma_2>0$.
\end{assumption}
Note that, in contrast to Assumption~\ref{ass:grad_noise}, Assumption~\ref{ass:GM_4} assumes the conditional mean may be non-zero (but decaying).

Finally, let $C_{0}$ be the constant as defined after (2.3) in~\cite{gelfand1991recursive}.

The following result on the convergence of the stochastic process \eqref{eq:GM_alg} was obtained in~\cite{gelfand1991recursive}.
\begin{theorem}[Theorem 4 in~\cite{gelfand1991recursive}]
\label{th:GM} Let $\{\mathbf{z}_{t}\}$ be given by~\eqref{eq:GM_alg}. Suppose that Assumptions \ref{ass:GM_1}--\ref{ass:GM_4} hold 
and assume $A$ and $B$ in \eqref{eq:GM_alg1} satisfy $B/A>C_{0}$. Then, for any bounded continuous function $f:\mathbb{R}^{d}\to\mathbb{R}$, we have that
\begin{align}
\label{th:GM1}
\lim_{t\rightarrow\infty}\mathbb{E}_{0,\mathbf{z}_{0}}\left[f(\mathbf{z}_{t})\right]=\pi(f).
\end{align}
\end{theorem}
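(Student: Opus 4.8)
The plan is to reduce Theorem~\ref{th:GM} to the continuous-time theory of simulated-annealing diffusions, reading the recursion \eqref{eq:GM_alg} as a noisy Euler discretization of the Langevin diffusion $dX(s) = -\nabla U(X(s))\,ds + \sigma(s)\,dW(s)$, where the effective temperature is set by the ratio of the noise gain to the step-size gain. First I would introduce the intrinsic clock $s_t = \sum_{k\le t} a_k \sim A\log t$ and interpolate $\{\mz_t\}$ into a continuous-time process $X(\cdot)$ run on this clock. Under this time change one step contributes drift $-\nabla U\,\Delta s$ with $\Delta s = a_t$ and injected variance $b_t^2 = (b_t^2/a_t)\,\Delta s$, so the diffusion coefficient is $b_t^2/a_t = B/(A\log\log t)$; since $\log\log t \sim \log s$, the interpolated process behaves like the annealing diffusion with $\sigma^2(s)\sim B/(A\log s)$, i.e.\ the classical logarithmic cooling schedule whose frozen invariant Gibbs law is exactly the $\pi^\e$ of Assumption~\ref{ass:GM_2} with $\e^2 = \sigma^2(s)\to 0$.

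The second step is to show that the interpolated process is an asymptotic pseudo-trajectory of this diffusion, so that it inherits the diffusion's weak limit. Two error sources must be shown asymptotically negligible: the Euler discretization error, controlled by the radial growth behavior of $\nabla U$ encoded in Assumptions~\ref{ass:GM_5}--\ref{ass:GM_6}; and the gradient noise $\bxi_t$, which injects an extra increment $-a_t\bxi_t$. Splitting $\bxi_t$ into its conditional mean and a martingale-difference part, Assumption~\ref{ass:GM_4} gives $\|\E[\bxi_t\mid\calG_t]\|\le C_1 a_t^{\gamma_2}$ and $\E[\|\bxi_t\|^2\mid\calG_t]\le C_1 a_t^{\gamma_1}$. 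The total accumulated bias is then governed by $\sum_t a_t^{1+\gamma_2}$ and the martingale part by its quadratic variation $\sum_t a_t^{2+\gamma_1}$; the conditions $\gamma_2>0$ and $\gamma_1>-1$ are precisely what make each of these finite, so that the perturbation from $\bxi_t$ vanishes faster than the drift restores and does not move the limiting law.

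The third step invokes the continuous-time convergence result for logarithmically cooled Langevin diffusions (in the line of Chiang--Hwang--Sheu and Holley--Kusuoka--Stroock): under the regularity and coercivity in Assumption~\ref{ass:GM_1}, in particular $\inf(|\nabla U|^2-\Delta U)>-\infty$, which makes the time-dependent Gibbs law the instantaneous invariant measure and supplies the spectral-gap/log-Sobolev machinery, together with the radial confinement Assumptions~\ref{ass:GM_3} and \ref{ass:GM_5}--\ref{ass:GM_6}, which prevent escape to infinity and yield tightness, and the critical cooling condition $B/A>C_0$, the diffusion $X(s)$ converges weakly as $s\to\infty$ to the measure $\pi$ of Assumption~\ref{ass:GM_2}. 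Transferring this back along the time change and combining with the pseudo-trajectory estimate yields $\E_{0,\mz_0}[f(\mz_t)]\to\pi(f)$ for every bounded continuous $f$, which is \eqref{th:GM1}; since $\pi$ is the weak limit of the Gibbs laws $\pi^\e$, it concentrates on the global minima of $U$, delivering global convergence.

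The main obstacle is making the pseudo-trajectory comparison uniform over the infinite horizon while the temperature, and hence the spectral gap of the frozen diffusion, degrades to zero: the restoring strength of the diffusion decays like a negative power of $\log s$, so the perturbation bounds from Assumption~\ref{ass:GM_4} and the Euler error must be shown to decay strictly faster than this weakening contraction. This is the delicate balance underlying the threshold $B/A>C_0$, where $C_0$ quantifies the depth of the deepest non-global well that must be escaped, and it is exactly where the confinement Assumptions~\ref{ass:GM_3} and \ref{ass:GM_5}--\ref{ass:GM_6} are indispensable, since they keep the process in a region where the spectral-gap estimates and the comparison remain valid.
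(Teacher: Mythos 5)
This theorem is not proved in the paper at all: it is quoted as Theorem 4 of \cite{gelfand1991recursive}, so there is no in-paper argument to compare yours against. Judged as a reconstruction of the Gelfand--Mitter proof, your outline is faithful to the strategy of the original source: the argument does proceed by the time change $s_t=\sum_{k\le t}a_k\sim A\log t$, under which \eqref{eq:GM_alg} becomes a perturbed Euler scheme for the annealing diffusion with $\sigma^2(s)\sim (B/A)/\log s$, and then invokes the Chiang--Hwang--Sheu convergence theory for that diffusion, with $C_0$ the critical constant of that theory. Your accounting of why $\gamma_1>-1$ and $\gamma_2>0$ in Assumption~\ref{ass:GM_4} make the accumulated bias ($\sum_t a_t^{1+\gamma_2}$) and the quadratic variation of the martingale part ($\sum_t a_t^{2+\gamma_1}$) summable is exactly the role those exponents play.

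As a proof, however, what you have is a roadmap with its hardest step acknowledged but not executed, and that step is essentially the whole content of the theorem. A generic asymptotic-pseudo-trajectory argument does not suffice here because the limiting object is not a fixed flow but a diffusion whose restoring strength degrades like a power of $1/\log s$; one must show the discretization error and the $\bxi_t$-perturbation decay strictly faster than the contraction weakens, \emph{uniformly over the infinite horizon}. In the original proof this is handled by first establishing tightness of $\{\mz_t\}$ via a separate Lyapunov computation (this is where $\inf(|\nabla U|^2-\Delta U)>-\infty$ and Assumptions~\ref{ass:GM_3}, \ref{ass:GM_5}--\ref{ass:GM_6} are actually used, not merely cited), and then by a block argument: freeze the temperature on long time blocks, let the frozen diffusion equilibrate to its Gibbs law, and compare the algorithm to the diffusion only over finite blocks where the comparison constants are controlled. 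None of these estimates appears in your proposal, so it should be read as a correct identification of the proof architecture in \cite{gelfand1991recursive} rather than a self-contained argument.
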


\section{Main Results}
\label{sec:main_res}
We will now prove the main result of the paper. We shall proceed as follows. We will first study the behavior of the $\R^d$-valued networked averaged process
\begin{equation} \label{eq:avg}
\obx_{t}=\frac{1}{N}\sum_{n=1}^{N}\mathbf{x}_{n}(t), \quad\quad t\geq 1.
\end{equation}
Using Theorem~\ref{th:GM}, we will show that $\bar \vx_t$ converges to
the set of global minima of $U(\cdot)$ (see Lemma~\ref{lm:avg_conv}). After proving Lemma~\ref{lm:avg_conv} we will present Theorem~\ref{th:main_result}, which is the main result of the paper. Theorem~\ref{th:main_result} follows as a straightforward consequence of Lemmas~\ref{lm:mean_conv_as} and \ref{lm:avg_conv}.

Note that, taking the average on both sides of~\eqref{eq:update} we obtain,
\begin{align}
\label{eq:avg_update}
\obx_{t+1}=\obx_{t}-\alpha_{t}\left(\nabla U(\obx_{t})+\obzeta_{t}+\oR_{t}\right)+\gamma_{t}\omw_{t},
\end{align}
where
\begin{align}\oR_{t}=\frac{1}{N}\sum_{n=1}^{N}\left(\nabla U_{n}(\mathbf{x}_{n}(t))-\nabla U_{n}(\obx_{t})\right),
\end{align}
and $\obzeta_t$ and $\omw_t$ are given in \eqref{lm:mean_conv_as3}.

The following lemma shows that the networked-averaged process $\{\overline \vx_t\}$ converges to the set of global minima of $U(\cdot)$.
\begin{lemma}
\label{lm:avg_conv}
Let $\{\vx_t\}$ satisfy the recursion \eqref{eq:vec_update} and let $\{\overline \vx_t\}$ be given by \eqref{eq:avg}, with initial condition $\bold{\bar x_0}\in \R^d$. Let Assumptions \ref{ass:conn}--\ref{ass:weights} hold and Assume $U(\cdot)$ satisfies Assumptions \ref{ass:Lip}--\ref{ass:similarity} and \ref{ass:GM_1}--\ref{ass:GM_6}.
Further, suppose that $c_{\alpha}$ and $c_{\gamma}$ in Assumption~\ref{ass:weights} satisfy, $c_{\gamma}^{2}/c_{\alpha}>C_{0}$, where $C_0$ is defined after Assumption~\ref{ass:GM_4}. Then, for any bounded continuous function $f:\mathbb{R}^{d}\to\mathbb{R}$, we have that
\begin{align}
\label{lm:avg_conv1}
\lim_{t\rightarrow\infty}\mathbb{E}_{0,x_0}\left[f(\obx_{t})\right]=\pi(f).
\end{align}
\end{lemma}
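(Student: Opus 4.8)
The plan is to recognize the network-averaged recursion \eqref{eq:avg_update} as an instance of the Gelfand--Mitter recursion \eqref{eq:GM_alg} and then invoke Theorem~\ref{th:GM}. Writing \eqref{eq:avg_update} as
\[
\obx_{t+1} = \obx_{t} - \alpha_{t}\bigl(\nabla U(\obx_{t}) + \bxi_{t}\bigr) + \gamma_{t}\omw_{t}, \qquad \bxi_{t}\dot=\obzeta_{t}+\oR_{t},
\]
I would identify $\mathbf{z}_{t}=\obx_{t}$ and $a_{t}=\alpha_{t}=c_{\alpha}/t$ (so $A=c_{\alpha}$), and absorb the $N$-fold averaging of the Gaussian term into the noise scale: since the $\mathbf{w}_{n}(t)$ are i.i.d.\ $N(0,I_{d})$ and mutually independent across $n$ (Assumption~\ref{ass:gauss}), $\omw_{t}\sim N(0,\tfrac{1}{N}I_{d})$, so setting $\mathbf{w}_{t}=\sqrt{N}\,\omw_{t}\sim N(0,I_{d})$ and $b_{t}=\gamma_{t}/\sqrt{N}$ gives $\gamma_{t}\omw_{t}=b_{t}\mathbf{w}_{t}$ with $b_{t}^{2}=c_{\gamma}^{2}/(N\,t\log\log t)$, matching \eqref{eq:GM_alg1} with $B\propto c_{\gamma}^{2}$. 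The ratio hypothesis $B/A>C_{0}$ of Theorem~\ref{th:GM} then translates into a lower bound on $c_{\gamma}^{2}/c_{\alpha}$ of the form assumed in the statement, and the conditions on $U$ required by Theorem~\ref{th:GM}, namely Assumptions~\ref{ass:GM_1}--\ref{ass:GM_3} together with the growth Assumptions~\ref{ass:GM_5}--\ref{ass:GM_6}, are imposed directly in the hypotheses.

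The substance of the argument is verifying that the effective noise $\bxi_{t}=\obzeta_{t}+\oR_{t}$ meets the Gelfand--Mitter noise condition (Assumption~\ref{ass:GM_4}). I would work relative to the natural filtration $\{\mathcal{H}_{t}\}$, which refines the filtration $\{\calG_{t}\}$ of \eqref{eq:GM_alg}, so that bounds transfer to $\calG_{t}$ by the tower property and conditional Jensen. The gradient noise $\obzeta_{t}=\tfrac{1}{N}\sum_{n}\bzeta_{n}(t)$ inherits from Assumption~\ref{ass:grad_noise} that $\E[\obzeta_{t}\mid\mathcal{H}_{t}]=0$ and $\E[\|\obzeta_{t}\|^{2}\mid\mathcal{H}_{t}]\leq C_{1}/N$, supplying the second-moment bound with $\gamma_{1}=0>-1$ and contributing nothing to the conditional-mean bound. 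The consensus residual $\oR_{t}=\tfrac{1}{N}\sum_{n}\bigl(\nabla U_{n}(\mathbf{x}_{n}(t))-\nabla U_{n}(\obx_{t})\bigr)$ is $\mathcal{H}_{t}$-measurable, so $\E[\bxi_{t}\mid\mathcal{H}_{t}]=\oR_{t}$; by the Lipschitz property (Assumption~\ref{ass:Lip}),
\[
\|\oR_{t}\|\leq\frac{L}{N}\sum_{n=1}^{N}\|\mathbf{x}_{n}(t)-\obx_{t}\|,
\]
and Lemma~\ref{lm:mean_conv_as} gives $(t+1)^{\tau}\|\mathbf{x}_{n}(t)-\obx_{t}\|\to 0$ a.s.\ for every $\tau<1/2-\tau_{\beta}$. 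Since $\tau_{\beta}<1/2$, one may fix $\gamma_{2}\in(0,\,1/2-\tau_{\beta})$ so that $\|\oR_{t}\|$ decays strictly faster than $a_{t}^{\gamma_{2}}\sim(t+1)^{-\gamma_{2}}$, which is exactly the conditional-mean decay required. With Assumption~\ref{ass:GM_4} in hand, Theorem~\ref{th:GM} yields $\lim_{t}\E_{0,x_{0}}[f(\obx_{t})]=\pi(f)$ for every bounded continuous $f$, the deterministic initialization $\obx_{0}$ matching the $\E_{0,\mathbf{z}_{0}}$ convergence in \eqref{th:GM1}.

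The main obstacle is reconciling the consensus estimate with the precise form of Assumption~\ref{ass:GM_4}. Lemma~\ref{lm:mean_conv_as} delivers only almost-sure pathwise decay of $\|\oR_{t}\|$, with a random (a.s.\ finite, but not deterministically bounded) multiplicative constant $K(\omega)$ such that $\|\oR_{t}\|\leq K(\omega)(t+1)^{-\gamma_{2}}$, whereas Theorem~\ref{th:GM} is stated with a deterministic constant $C_{1}$; the same gap appears in the second-moment requirement, where $\E[\|\oR_{t}\|^{2}\mid\calG_{t}]$ must be controlled uniformly. Closing this gap --- in effect quantifying the rate at which the innovation term collapses to a martingale-difference sequence --- is the crux of the proof. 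I would address it either by upgrading the almost-sure consensus bound of Lemma~\ref{lm:mean_conv_as} to a conditional (or $\mathcal{L}^{p}$) moment bound with a deterministic constant, or by a localization/stopping-time argument that runs the Gelfand--Mitter comparison along almost every trajectory and then passes back to the unconditional expectation via the boundedness of $f$ and dominated convergence. By comparison, the parameter identification and the handling of $\obzeta_{t}$ should be routine.
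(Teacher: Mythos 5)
Your setup is right and you have correctly diagnosed the one nontrivial issue: Lemma~\ref{lm:mean_conv_as} gives only pathwise decay $\|\oR_{t}\|\leq K(\omega)(t+1)^{-\tau}$ with a random constant, while Assumption~\ref{ass:GM_4} demands a deterministic constant in the conditional bounds. But you stop at naming this as ``the crux'' and listing two candidate strategies without executing either, so the proof is not complete. Your first suggestion (upgrading the a.s.\ bound to a conditional moment bound with a deterministic constant) would not go through directly --- nothing in the hypotheses gives uniform integrability of the consensus error, and the whole point of the difficulty is that no such deterministic bound is available for the untruncated $\oR_{t}$. Your second suggestion is essentially what the paper does, and here is the concrete mechanism you are missing. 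Fix $\delta>0$. Since $t^{\tau}\|\mathbf{x}_{n}(t)-\obx_{t}\|\to 0$ a.s., Egorov's theorem yields a deterministic $R_{\delta}$ with $\mathbb{P}(\sup_{t}t^{\tau}\|\oR_{t}\|\leq LR_{\delta})>1-\delta$. Define the truncated, $\mathcal{H}_{t}$-adapted process $\oR_{t}^{\delta}$ equal to $\oR_{t}$ when $t^{\tau}\|\oR_{t}\|\leq LR_{\delta}$ and clipped to norm $LR_{\delta}t^{-\tau}$ otherwise, and run the auxiliary recursion $\obx_{t+1}^{\delta}=\obx_{t}^{\delta}-\alpha_{t}(\nabla U(\obx_{t}^{\delta})+\obzeta_{t}+\oR_{t}^{\delta})+\gamma_{t}\omw_{t}$ from the same initial condition. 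Now $\|\mathbb{E}[\obzeta_{t}+\oR_{t}^{\delta}\mid\mathcal{F}_{t}^{\delta}]\|\leq LR_{\delta}t^{-\tau}$ and $\mathbb{E}[\|\obzeta_{t}+\oR_{t}^{\delta}\|^{2}\mid\mathcal{F}_{t}^{\delta}]\leq 2C_{1}+2L^{2}R_{\delta}^{2}t^{-2\tau}$ hold with \emph{deterministic} constants, so Theorem~\ref{th:GM} applies to $\{\obx_{t}^{\delta}\}$ with $\gamma_{1}=0$, $\gamma_{2}=\tau$. Since $\{\obx_{t}^{\delta}\}$ and $\{\obx_{t}\}$ coincide on an event of probability at least $1-\delta$, boundedness of $f$ gives $\limsup_{t}|\mathbb{E}[f(\obx_{t})]-\pi(f)|\leq 2\|f\|_{\infty}\delta$, and letting $\delta\to 0$ finishes the proof. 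Without this (or an equivalent) truncation-and-comparison step actually written out, the invocation of Theorem~\ref{th:GM} is unjustified.

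One further small point in your favor: you correctly observe that $\omw_{t}\sim N(0,\tfrac{1}{N}I_{d})$, so the effective annealing coefficient is $B=c_{\gamma}^{2}/N$ rather than $c_{\gamma}^{2}$; the paper glosses over this normalization, and tracking it carefully would tighten the stated condition on $c_{\gamma}^{2}/c_{\alpha}$.
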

\begin{proof}
The result will be proven by showing that the $\{\obx_{t}\}$ in \eqref{eq:avg_update} falls under the purview of Theorem~\ref{th:GM}, and, in particular, that Assumption~\ref{ass:GM_4} is satisfied. To do this, the key technical issue lies in handling the process $\{\oR_{t}\}$. Specifically, we must restate the a.s. convergence obtained in Lemma~\ref{lm:mean_conv_as} in terms of conditional expectations as required by Assumption~\ref{ass:GM_4}.

Fix $\tau\in (0, \frac{1}{2}-\tau_{\beta})$ and let $\delta>0$ be arbitrary. Since, by Lemma~\ref{lm:mean_conv_as}, $t^{\tau}\|\mathbf{x}_{n}(t)-\obx_{t}\|\rightarrow 0$ as $t\rightarrow 0$ a.s. for all $n$, by Egorov's theorem there exists a constant $R_{\delta}>0$ such that
\begin{align}
\label{lm:avg_conv2}
\mathbb{P}\left(\sup_{t\geq 0} t^{\tau}\left\|\mathbf{x}_{n}(t)-\obx_{t}\right\|\leq R_{\delta}\right)>1-\delta,~~\forall n.
\end{align}
Note that, by Assumption~\ref{ass:Lip},
\begin{align}
\label{lm:avg_conv3}
\mathbb{P}\left(\sup_{t\geq 0} t^{\tau}\left\|\mathbf{x}_{n}(t)-\obx_{t}\right\|\leq R_{\delta}\right)\leq \mathbb{P}\left(\sup_{t\geq 0} t^{\tau}\left\|\oR_{t}\right\|\leq LR_{\delta}\right).
\end{align}
Now consider the $\mathcal{H}_{t}$-adapted process $\{\oR^{\delta}_{t}\}$, given by
\begin{align}
\label{lm:avg_conv4}
\oR^{\delta}_{t}=\left\{\begin{array}{ll}
                                \oR_{t} & \mbox{if $t^{\tau}\|\oR_{t}\|\leq LR_{\delta}$}\\
                                \frac{LR_{\delta}}{t^{\tau}} & \mbox{if $t^{\tau}\|\oR_{t}\|> LR_{\delta}$,}\\
                                \end{array}
                                \right.
\end{align}
for all $t\geq 0$. Note that, by construction,
\begin{align}
\label{lm:avg_conv5}
\mathbb{P}\left(\sup_{t\geq 0} t^{\tau}\left\|\oR_{t}^{\delta}\right\|\leq LR_{\delta}\right)=1.
\end{align}
Now consider the stochastic process $\{\obx^{\delta}_{t}\}$ evolving as
\begin{align}
\label{lm:avg_conv6}
\obx_{t+1}^{\delta}=\obx_{t}^{\delta}-\alpha_{t}\left(\nabla U(\obx^{\delta}_{t})+\obzeta_{t}+\oR^{\delta}_{t}\right)+\gamma_{t}\omw_{t}
\end{align}
with initial condition $\obx_{0}^{\delta}=\frac{1}{N}\sum_{n=1}^{N}\mathbf{x}_{n}(0)$. It is readily seen that $\{\obx_{t}^{\delta}\}$ is $\mathcal{H}_{t}$-adapted and the processes $\{\obx_{t}^{\delta}\}$ and $\{\obx_{t}\}$ agree, for all $t\geq 0$, on the event $\left\{\sup_{t\geq 0}t^{\tau}\left\|\oR_{t}\right\|\leq LR_{\delta}\right\}$, so that
\begin{align}
\label{lm:avg_conv7}
\mathbb{P}\left(\sup_{t\geq 0}\left\|\obx_{t}^{\delta}-\obx_{t}\right\|>0\right)\leq \delta.
\end{align}
Let $\obxi_{t}=\obzeta_{t}+\oR^{\delta}_{t}$ for all $t\geq 0$, and denote by $\mathcal{F}^{\delta}_{t}$ the $\sigma$-algebra
\begin{align}
\label{lm:avg_conv8}
\mathcal{F}^{\delta}_{t}=\sigma\left(\obx_{0}^{\delta}, \obxi_{0},\cdots,\obxi_{t-1},\omw_{0},\cdots,\omw_{t-1}\right).
\end{align}
Note that $\mathcal{F}_{t}^\delta\subset\mathcal{H}_{t}$ for all $t\geq 0$.

By Assumption~\ref{ass:grad_noise} and~\eqref{lm:avg_conv5} we have that, almost surely,
\begin{align}
\label{lm:avg_conv9}
\left\|\mathbb{E}\left[\left(\obzeta_{t}+\oR^{\delta}_{t}\right)~|~\mathcal{F}_{t}^\delta\right]\right\|\leq\left\|\mathbb{E}\left[\oR^{\delta}_{t}~|~\mathcal{F}_{t}^\delta\right]\right\|\leq \frac{LR_{\delta}}{t^{\tau}}
\end{align}
and, by the parallelogram law,
\begin{align}
\label{lm:avg_conv10}
\mathbb{E}\left[\left\|\obzeta_{t}+\oR^{\delta}_{t}\right\|^{2}~|~\mathcal{F}_{t}^\delta\right] \leq & 2\mathbb{E}\left[\left\|\obzeta_{t}\right\|^{2}~|~\mathcal{F}_{t}^\delta \right] \\
&~+ 2\mathbb{E}\left[\left\|\oR^{\delta}_{t}\right\|^{2}~|~\mathcal{F}_{t}^\delta\right]\\ \leq & 2C_{1}+\frac{2L^{2}R^{2}_{\delta}}{t^{2\tau}}.
\end{align}
It is now straightforward to see that the process $\{\obx^{\delta}_{t}\}$ falls under the purview of Theorem~\ref{th:GM}. In particular, letting by taking $\gamma_{1}=0$ and $\gamma_{2}=\tau >0$ and letting $\calG_t =\calF_t^\delta$ we see that Assumption~\ref{ass:GM_4} is satisfied.
We conclude that
\begin{align}
\label{lm:avg_conv11}
\lim_{t\rightarrow\infty}\mathbb{E}\left[f(\obx_{t}^{\delta})\right]=\pi(f).
\end{align}
Note that, by~\eqref{lm:avg_conv7},
\begin{align}
\label{lm:avg_conv12}
\left|\mathbb{E}\left[f(\obx_{t})\right]-\pi(f)\right| \leq & \mathbb{E}\left[\left|f(\obx_{t})-f(\obx^{\delta}_{t})\right|\right]\\
& +\left|\mathbb{E}\left[f(\obx_{t}^{\delta})\right]-\pi(f)\right| \\
 \leq & 2\left\|f\right\|_{\infty}\delta+\left|\mathbb{E}\left[f(\obx_{t}^{\delta})\right]-\pi(f)\right|.
\end{align}
Hence, by~\eqref{lm:avg_conv11}, we have
\begin{align}
\label{lm:avg_conv13}
\limsup_{t\rightarrow\infty}\left|\mathbb{E}\left[f(\obx_{t})\right]-\pi(f)\right|\leq 2\left\|f\right\|_{\infty}\delta.
\end{align}
Since $\delta>0$ is arbitrary, we conclude that
\begin{align}
\label{lm:avg_conv14}
\lim_{t\rightarrow\infty}\left|\mathbb{E}\left[f(\obx_{t})\right]-\pi(f)\right|=0.
\end{align}
\end{proof}

We now state the main result concerning convergence of the agent estimates $\{\mathbf{x}_{n}(t)\}$ to the set of global minima of $U(\cdot)$.
\begin{theorem}
\label{th:main_result}
Let $\{\vx_t\}$ satisfy the recursion \eqref{eq:vec_update} with initial condition $\bold{x_0}$. Let Assumptions \ref{ass:conn}--\ref{ass:weights} hold and assume $U(\cdot)$ satisfies Assumptions \ref{ass:Lip}--\ref{ass:similarity} and \ref{ass:GM_1}--\ref{ass:GM_6}.
Further, suppose that $c_{\alpha}$ and $c_{\gamma}$ in Assumption~\ref{ass:weights} satisfy $c_{\gamma}^{2}/c_{\alpha}>C_{0}$, where $C_0$ is defined after Assumption~\ref{ass:GM_4}.
Then, for any bounded continuous function $f:\mathbb{R}^{d}\to\mathbb{R}$ and for all $n=1,\ldots,N$, we have that
\begin{align}
\label{th:main_result1}
\lim_{t\rightarrow\infty}\mathbb{E}_{0,x_0}\left[f(\mathbf{x}_{n}(t))\right]=\pi(f).
\end{align}
\end{theorem}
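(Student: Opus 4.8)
The plan is to derive Theorem~\ref{th:main_result} as a direct consequence of the two main lemmas already established: Lemma~\ref{lm:mean_conv_as} (consensus) and Lemma~\ref{lm:avg_conv} (convergence of the network-averaged process to $\pi$). The essential idea is that the quantity $f(\mathbf{x}_n(t))$ should be close to $f(\obx_t)$ because consensus forces $\|\mathbf{x}_n(t)-\obx_t\|\to 0$, while Lemma~\ref{lm:avg_conv} already tells us $\mathbb{E}[f(\obx_t)]\to\pi(f)$. The triangle inequality
\[
\left|\mathbb{E}_{0,x_0}[f(\mathbf{x}_n(t))]-\pi(f)\right| \leq \mathbb{E}\left[\left|f(\mathbf{x}_n(t))-f(\obx_t)\right|\right] + \left|\mathbb{E}[f(\obx_t)]-\pi(f)\right|
\]
isolates the two pieces: the second term vanishes by Lemma~\ref{lm:avg_conv}, so everything reduces to controlling the first term.

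First I would handle the second term, which is immediate: Lemma~\ref{lm:avg_conv} gives $\lim_{t\to\infty}\left|\mathbb{E}[f(\obx_t)]-\pi(f)\right|=0$ under exactly the hypotheses assumed here (including $c_\gamma^2/c_\alpha>C_0$). For the first term, I would exploit that $f$ is bounded and continuous. By Lemma~\ref{lm:mean_conv_as}, $\|\mathbf{x}_n(t)-\obx_t\|\to 0$ almost surely (taking $\tau=0$ in~\eqref{lm:mean_conv_as1}), so by continuity of $f$ we have $f(\mathbf{x}_n(t))-f(\obx_t)\to 0$ a.s. Since $f$ is bounded, $|f(\mathbf{x}_n(t))-f(\obx_t)|\leq 2\|f\|_\infty$, and the bounded convergence theorem yields $\mathbb{E}[|f(\mathbf{x}_n(t))-f(\obx_t)|]\to 0$. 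Combining the two estimates gives $\lim_{t\to\infty}\left|\mathbb{E}_{0,x_0}[f(\mathbf{x}_n(t))]-\pi(f)\right|=0$, which is~\eqref{th:main_result1}.

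The one subtlety worth stating carefully is the passage from almost-sure convergence of $f(\mathbf{x}_n(t))-f(\obx_t)$ to convergence of its expectation. The \emph{main point to verify} is that the dominated/bounded convergence theorem applies: this is legitimate precisely because $f$ is assumed bounded, so the integrands are uniformly dominated by the constant $2\|f\|_\infty$, and no separate uniform integrability argument is needed. This is genuinely the only nontrivial step; the rest is routine once Lemmas~\ref{lm:mean_conv_as} and~\ref{lm:avg_conv} are in hand. I do not anticipate any real obstacle, since both lemmas have already done the heavy lifting (the delicate annealing analysis via Theorem~\ref{th:GM} in Lemma~\ref{lm:avg_conv}, and the mixed-time-scale stochastic-approximation estimate in Lemma~\ref{lm:mean_conv_as}); Theorem~\ref{th:main_result} is essentially a clean stitching of the two.
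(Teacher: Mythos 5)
Your proof is correct and follows essentially the same route as the paper: both deduce the result from Lemma~\ref{lm:mean_conv_as} (with $\tau=0$) and Lemma~\ref{lm:avg_conv}, the paper simply compressing your triangle-inequality-plus-bounded-convergence step into the remark that almost-sure convergence of $\|\mathbf{x}_n(t)-\obx_t\|$ to zero forces the two processes to share the same weak limits. Your explicit appeal to the bounded convergence theorem is a legitimate and slightly more detailed rendering of that same argument.
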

In the above theorem, we recall that we use the conventions \eqref{eq:pi-f-def}-- \eqref{eq:def-cond-E}, and that $\pi$ is a probability measure placing mass 1 on the set of global minima of $U(\cdot)$ as constructed in Assumption~\ref{ass:GM_2}. The proof of the theorem follows below.
\begin{proof} The proof follows immediately from Lemma~\ref{lm:mean_conv_as} and Lemma~\ref{lm:avg_conv}. In particular, by Lemma~\ref{lm:mean_conv_as}, we have that $\|\mathbf{x}_{n}(t)-\obx_{t}\|\rightarrow 0$ as $t\rightarrow\infty$ a.s. (by taking $\tau=0$ in Lemma~\ref{lm:mean_conv_as}) and the desired assertion follows by noting that the above a.s. convergence implies that the processes $\{\mathbf{x}_{n}(t)\}$ and $\{\obx_{t}\}$ have the same weak limits.
\end{proof}

\section{Conclusions}
The paper proves, for a distributed consensus + innovations algorithm, convergence to the set of global optima in a distributed nonconvex optimization problems. Each agent only has information about the gradient of its personal objective function and the current state of neighboring agents. Convergence to a global minimum is achieved by means of decaying (annealing) noise injected into each agent's update step. The paper proved convergence (in probability) of the algorithm to the set of global minima of the sum objective.

\bibliographystyle{IEEEtran}
\bibliography{dist_glob_opt}

\end{document}